\DeclareRobustCommand{\brkbinom}{\genfrac\{\}{0pt}{}}
\newtheorem{theorem}{Theorem}[section]
\newtheorem{corollary}[theorem]{Corollary}
\newtheorem{lemma}[theorem]{Lemma}
\newtheorem{remark}[theorem]{Remark}
\makeatletter \@addtoreset{equation}{section} \makeatother
\newcommand{\N}{\mathbb{N}}
\newcommand{\R}{\mathbb{R}}
\newcommand{\QQ}{\mathbb{Q}}
\newcommand{\PP}{\mathbb{P}}
\newcommand{\EE}{\mathbb{E}}
\newcommand{\VV}{\mathbb{V}\mathrm{ar}}
\newcommand{\OO}{\mathcal O}
\newcommand{\oo}{\mathrm{o}}
\newcommand{\leqdef}{\vcentcolon=}
\newcommand{\reqdef}{=\vcentcolon}
\newcommand{\rd}{{\rm d}}
\newcommand{\ind}{\mathds{1}}
\begin{document}

\begin{frontmatter}

    \title{On the Le Cam distance between Poisson and Gaussian experiments and the asymptotic properties of Szasz estimators}%

    \author[a1]{Fr\'ed\'eric Ouimet\texorpdfstring{\corref{cor1}\fnref{fn1}}{)}}%

    \address[a1]{California Institute of Technology, Pasadena, USA.}%

    \cortext[cor1]{Corresponding author}%
    \ead{ouimetfr@caltech.edu}%

    \fntext[fn1]{F.\ O.\ is supported by a postdoctoral fellowship from the NSERC (PDF) and FRQNT (B3X suppl).}%

    \begin{abstract}
        In this paper, we prove a local limit theorem for the ratio of the Poisson distribution to the Gaussian distribution with the same mean and variance, using only elementary methods (Taylor expansions and Stirling's formula).
        We then apply the result to derive an upper bound on the Le Cam distance between Poisson and Gaussian experiments, which gives a complete proof of the sketch provided in the unpublished set of lecture notes by \cite{Pollard_2010_Beijing_Chap_1}, who uses a different approach.
        We also use the local limit theorem to derive the asymptotics of the variance for Bernstein c.d.f.\ and density estimators with Poisson weights on the positive half-line (also called Szasz estimators). The propagation of errors in the literature due to the incorrect estimate in Lemma~2 (iv) of \cite{MR2960952} is addressed in the Appendix.
    \end{abstract}

    \begin{keyword}
        Poisson distribution \sep local limit theorem \sep asymptotic statistics \sep Gaussian distribution \sep Le Cam distance \sep deficiency \sep comparison of experiments \sep Bernstein estimator \sep Szasz estimator \sep distribution function estimation \sep density estimation
        \MSC[2020]{Primary: 60F99 Secondary: 62E20, 62B15, 62G05, 62G07}
    \end{keyword}

\end{frontmatter}

\section{Introduction}\label{sec:intro}

    For any $\lambda > 0$, the $\mathrm{Poisson}\hspace{0.2mm}(\lambda)$ probability mass function is defined by
    \vspace{-1mm}
    \begin{equation}\label{eq:Poisson.pdf}
        P_{\lambda}(k) = \frac{\lambda^k}{k!} e^{-\lambda}, \quad k\in \N_0.
    \end{equation}
    The first objective of our paper is to derive, using only elementary methods, an asymptotic expansion for \eqref{eq:Poisson.pdf} in terms of the Gaussian density with the same mean and variance evaluated at $k$, namely:
    \begin{equation}\label{eq:phi.M}
        \frac{1}{\sqrt{2 \pi \lambda}} \exp\Big(-\frac{(k - \lambda)^2}{2 \lambda}\Big).
    \end{equation}

    \vspace{1mm}
    \noindent
    This kind of expansion can be useful in all sorts of estimation problems; we give two examples in Section~\ref{sec:applications} related to the Le Cam distance between Poisson and Gaussian experiments and the asymptotic properties of Bernstein estimators with Poisson weights (also called Szasz estimators).
    For a general presentation on local limit theorems, we refer the reader to \cite{MR1295242}.

    \newpage
    \begin{remark}
        Throughout the paper, the notation $u = \OO(v)$ means that $\limsup |u / v| < C$, as $\lambda\to \infty$, $m\to \infty$ or $n\to \infty$, depending on the context, where $C\in (0,\infty)$ is a universal constant.
        Whenever $C$ might depend on a parameter, we add a subscript (for example, $u = \OO_x(v)$).
        Similarly, $u = \oo(v)$ means that $\lim |u / v| = 0$ as $\lambda\to \infty$, $m\to \infty$ or $n\to \infty$, and subscripts indicate which parameters the convergence rate can depend on.
    \end{remark}

\section{A local limit theorem for the Poisson distribution}\label{sec:main.result}

    General local asymptotic expansions of probabilities related to the sums of lattice random variables are well-known in the literature, see e.g.\ Chapter IV in \cite{MR14626}, \cite{MR207011}, Theorem~1 in \cite{doi:10.1137/1114060}, Theorem~in \cite{MR0259992}, Theorem~1 in \cite{doi:10.1007/BF00967926}, Theorem~22.1 in \cite{MR0436272}, etc.
    However, the generality of the statements for these kinds of results often makes them difficult to apply or makes the practitioner have to rely on the validity of estimates that are difficult to verify.

    \vspace{3mm}
    For instance, based on Fourier analysis results in \cite{MR14626}, \cite[Result 4.2.7]{MR207011} shows that, uniformly in $k$,
    \begin{equation}\label{eq:LLT.Poisson.more.precise.1}
        \begin{aligned}
            P_{\lambda}(k)
            &= \frac{1}{\sqrt{\lambda}} \left\{\hspace{-1mm}
                \begin{array}{l}
                    \phi(\delta_k) - \frac{\phi^{(3)}(\delta_k)}{6 \lambda^{1/2}} + \frac{\phi^{(4)}(\delta_k)}{24 \lambda} + \frac{\phi^{(6)}(\delta_k)}{72 \lambda} \\[1mm]
                    - \frac{\phi^{(5)}(\delta_k)}{120 \lambda^{3/2}} - \frac{\phi^{(7)}(\delta_k)}{144 \lambda^{3/2}} - \frac{\phi^{(9)}(\delta_k)}{1296 \lambda^{3/2}} + \oo(\lambda^{-2})
                \end{array}
                \hspace{-1mm}\right\}, \quad \text{where } \delta_k \leqdef \frac{k - \lambda}{\sqrt{\lambda}},
        \end{aligned}
    \end{equation}
    and where $\phi^{(n)}(x) = (-1)^n \mathrm{He}_n(x) \phi(x)$, $\phi$ denotes the density function of the standard normal distribution, and $\mathrm{He}_n$ is the $n$-th probabilists' Hermite polynomial.
    If we ignore the exact form of the terms on the second line inside the braces, we get, uniformly in $k$,
    \begin{equation}\label{eq:LLT.Poisson.more.precise.1.next}
        \begin{aligned}
            P_{\lambda}(k)
            &= \frac{1}{\sqrt{\lambda}} \phi(\delta_k) \left\{1 + \frac{1}{\sqrt{\lambda}} \Big(\frac{1}{6} \delta_k^3 - \frac{1}{2} \delta_k\Big) + \frac{1}{\lambda} \Big(\frac{1}{72} \delta_k^6 - \frac{1}{6} \delta_k^4 + \frac{3}{8} \delta_k^2 - \frac{1}{12}\Big)\right\} \\[1mm]
            &\quad+ \OO\Big(\lambda^{-2} (|\delta_k| + |\delta_k|^9) \phi(\delta_k)\Big) + \oo(\lambda^{-2}).
        \end{aligned}
    \end{equation}
    To be clear, the factor $|\delta_k| + |\delta_k|^9$ in the $\OO(\cdot)$ term comes from the fact that the smallest and highest powers encountered inside the 5-th, 7-th and 9-th Hermite polynomials are $1$ and $9$, respectively (in particular, there are no constant terms in these three polynomials).
    If we want to be more precise, the $\oo(\lambda^{-2})$ error in \eqref{eq:LLT.Poisson.more.precise.1.next} has the following form (see Theorem 5 in \cite{MR14626} or Result 3.8 in \cite{MR207011} for details):
    For any fixed $s \geq 6$,
    \begin{equation}\label{eq:LLT.Poisson.more.precise.1.next.error}
        \oo(\lambda^{-2}) = \frac{1}{\sqrt{\lambda}} \phi(\delta_k) \sum_{j=4}^{s-2} \frac{\text{Polynomial of order $j$ in $\delta_k$}}{\lambda^{j/2}} + \oo(\lambda^{-(s-1)/2}).
    \end{equation}
    The lingering error $\oo(\lambda^{-(s-1)/2})$ means that if we try to divide both sides of \eqref{eq:LLT.Poisson.more.precise.1.next} by $\frac{1}{\sqrt{\lambda}} \phi(\delta_k)$ in order to get an approximation for the ratio $P_{\lambda}(k) / \frac{1}{\sqrt{\lambda}} \phi(\delta_k)$, then the error on the right-hand side of \eqref{eq:LLT.Poisson.more.precise.1.next.error} will become $\oo(\lambda^{-(s-2)/2} e^{\delta_k^2 / 2})$, which gives a very poor control when $\delta_k$ is large.
    In fact, if we were to use this estimate for our application in Section~\ref{sec:deficiency.distance} (look at Equations~\eqref{eq:I.plus.II.plus.III} and \eqref{eq:estimate.I.begin} in the proof of Theorem~\ref{thm:prelim.Carter}), we would need to control the error $\oo(\lambda^{-(s-2)/2} \, \EE[e^{\delta_K^2 / 2}])$ as $\lambda\to \infty$, where $K\sim \mathrm{Poisson}\hspace{0.2mm}(\lambda)$. But clearly, this cannot work since $\delta_K$ is roughly a standard normal, say $Z$, and we have $\EE[e^{Z^2 / 2}] = \infty$.

    \vspace{3mm}
    Alternatively, we have the following lemma to approximate the ratio $P_{\lambda}(k) / \frac{1}{\sqrt{\lambda}} \phi(\delta_k)$.
    It only holds in the {\it bulk} of the Poisson distribution, but this is enough for our purpose and its proof is much more accessible.
    Notice that the error in \eqref{lem:LLT.Poisson.expansion.log} is a finite polynomial in $\delta_k$, which makes it now possible to control $\OO(\lambda^{-3/2} \, \EE[1 + |\delta_K|^5])$ in the proof of Theorem~\ref{thm:prelim.Carter}.

    \begin{lemma}[Local limit theorem]\label{lem:LLT.Poisson}
        For all $\lambda > 0$, let
        \begin{equation}\label{lem:LLT.Poisson.def}
            \phi(y) \leqdef \frac{1}{\sqrt{2\pi}} e^{-y^2 / 2} \quad \text{and recall} \quad \delta_k \leqdef \frac{k - \lambda}{\sqrt{\lambda}}.
        \end{equation}
        Let $\eta\in (0,1)$ be given, possibly depending on $\lambda$.
        Then, as $\lambda\to \infty$, and uniformly for $k\in \N_0$ such that $\eta - 1 \leq \frac{\delta_k}{\sqrt{\lambda}}$, we have
        \begin{equation}\label{lem:LLT.Poisson.expansion.log}
            \begin{aligned}
                \log\bigg(\frac{P_{\lambda}(k)}{\frac{1}{\sqrt{\lambda}} \phi(\delta_k)}\bigg)
                &= \frac{1}{\sqrt{\lambda}} \Big(\frac{1}{6} \delta_k^3 - \frac{1}{2} \delta_k\Big) + \frac{1}{\lambda} \Big(-\frac{1}{12} \delta_k^4 + \frac{1}{4} \delta_k^2 - \frac{1}{12}\Big) + \OO\bigg(\frac{1 + |\delta_k|^5}{\lambda^{3/2} \eta^4}\bigg).
            \end{aligned}
        \end{equation}
        Furthermore, let $B > 0$ be given.
        Then, as $\lambda\to \infty$, and uniformly for $k\in \N_0$ such that $|\delta_k| \leq \lambda^{1/6} B$, we have
        \begin{equation}\label{lem:LLT.Poisson.expansion}
            \begin{aligned}
                \frac{P_{\lambda}(k)}{\frac{1}{\sqrt{\lambda}} \phi(\delta_k)} = 1
                &+ \frac{1}{\sqrt{\lambda}} \Big(\frac{1}{6} \delta_k^3 - \frac{1}{2} \delta_k\Big) + \frac{1}{\lambda} \Big(\frac{1}{72} \delta_k^6 - \frac{1}{6} \delta_k^4 + \frac{3}{8} \delta_k^2 - \frac{1}{12}\Big) + \OO\bigg(e^{B^3} \, \frac{1 + |\delta_k|^9}{\lambda^{3/2}}\bigg).
            \end{aligned}
        \end{equation}
    \end{lemma}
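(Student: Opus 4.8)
The plan is to start from $\log P_\lambda(k) = k\log\lambda - \lambda - \log(k!)$, insert Stirling's formula with an explicit remainder, and compare with $\log\!\big(\tfrac{1}{\sqrt\lambda}\phi(\delta_k)\big) = -\tfrac12\log(2\pi\lambda) - \tfrac12\delta_k^2$. Writing $k = \lambda(1+x)$ with $x \reqdef \delta_k/\sqrt\lambda$, the hypothesis $\eta - 1 \leq \delta_k/\sqrt\lambda$ becomes $1 + x = k/\lambda \geq \eta$ (in particular $k \geq \eta\lambda > 0$, so $k \geq 1$ and Stirling applies), and $\log(k!) = k\log k - k + \tfrac12\log(2\pi k) + \tfrac{1}{12k} + \OO(k^{-3})$ yields, after all the $\log(2\pi\,\cdot\,)$ terms cancel,
\[
    \log\!\bigg(\frac{P_\lambda(k)}{\tfrac{1}{\sqrt\lambda}\phi(\delta_k)}\bigg) = \underbrace{\lambda x + \tfrac{\lambda}{2}x^2 - \lambda(1+x)\log(1+x)}_{\reqdef\, A} \;\underbrace{-\;\tfrac12\log(1+x)}_{\reqdef\, B}\;\underbrace{-\;\frac{1}{12\lambda(1+x)}}_{\reqdef\, C}\; + \;\OO\!\big(k^{-3}\big),
\]
where the final term is $\OO(\eta^{-3}\lambda^{-3})$ since $k \geq \eta\lambda$, which is absorbed into the claimed error once one checks $\eta^{-3}\lambda^{-3} \leq \eta^{-4}\lambda^{-3/2}$.

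Next I would Taylor-expand $A$, $B$, $C$ about $x = 0$ to order $x^4$ with Lagrange remainder. For $f(x) \reqdef (1+x)\log(1+x)$ one computes $f^{(j)}(0) = 0,1,1,-1,2$ for $j=0,\dots,4$ and $f^{(5)}(\xi) = -6(1+\xi)^{-4}$, so that $f(x) = x + \tfrac{x^2}{2} - \tfrac{x^3}{6} + \tfrac{x^4}{12} - \tfrac{x^5}{20(1+\xi)^4}$ for some $\xi$ between $0$ and $x$; since $1+\xi \geq \min\{1,1+x\} \geq \eta$, multiplying by $\lambda$ and using $\lambda|x|^j = |\delta_k|^j\lambda^{1-j/2}$ gives $A = \tfrac{\delta_k^3}{6\sqrt\lambda} - \tfrac{\delta_k^4}{12\lambda} + \OO\big(\lambda^{-3/2}\eta^{-4}|\delta_k|^5\big)$. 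The same device applied to $\log(1+x)$ and to $\tfrac{1}{1+x}$ (for the latter it is simplest to write $\tfrac{1}{1+x} - 1 = \tfrac{-x}{1+x}$ with $|x|/(1+x) \leq |x|/\eta$) gives $B = -\tfrac{\delta_k}{2\sqrt\lambda} + \tfrac{\delta_k^2}{4\lambda} + \OO\big(\lambda^{-3/2}\eta^{-3}|\delta_k|^3\big)$ and $C = -\tfrac{1}{12\lambda} + \OO\big(\lambda^{-3/2}\eta^{-1}|\delta_k|\big)$. Summing $A + B + C + \OO(\eta^{-3}\lambda^{-3})$ and using $\eta^{-1},\eta^{-3} \leq \eta^{-4}$ together with $|\delta_k|^j \leq 1 + |\delta_k|^5$ for $j \in \{1,3\}$ produces exactly \eqref{lem:LLT.Poisson.expansion.log}.

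For \eqref{lem:LLT.Poisson.expansion}, the constraint $|\delta_k| \leq \lambda^{1/6}B$ forces $\delta_k/\sqrt\lambda \geq -B\lambda^{-1/3}$, so for $\lambda$ large the first part applies with the fixed choice $\eta = \tfrac12$; the ratio then equals $\exp(u + v + R)$ with $u \reqdef \tfrac{1}{\sqrt\lambda}\big(\tfrac16\delta_k^3 - \tfrac12\delta_k\big)$, $v \reqdef \tfrac{1}{\lambda}\big(-\tfrac{1}{12}\delta_k^4 + \tfrac14\delta_k^2 - \tfrac1{12}\big)$ and $R = \OO\big(\lambda^{-3/2}(1+|\delta_k|^5)\big)$. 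Here $|\delta_k| \leq \lambda^{1/6}B$ forces $|u| \leq \tfrac16 B^3 + \tfrac12 B$ (bounded, but not small) while $v = \OO(B^4\lambda^{-1/3}) = \oo(1)$ and $R = \oo(1)$. Expanding $\exp(u+v+R) = \exp(u)\exp(v+R)$, I would isolate $1 + u + \tfrac{u^2}{2} + v$ and show every remaining contribution is $\OO\big(e^{B^3}\lambda^{-3/2}(1+|\delta_k|^9)\big)$: the tail $\sum_{n\geq 3}u^n/n! = \OO(|u|^3 e^{|u|})$ via $|u|^3 = \OO(\lambda^{-3/2}(1+|\delta_k|^9))$; the terms $(v+R)^n/n!$ with $n\geq 2$ via $(v+R)^2 = \OO(\lambda^{-3/2}(1+|\delta_k|^9))$ (using crude bounds such as $\lambda^{-1/2}|\delta_k|^8 \leq 1 + |\delta_k|^9$) together with $\exp(|u|)\exp(|v+R|) = \OO(e^{B^3})$; and the cross term $(\exp(u)-1)(v+R) = \OO(e^{B^3}\lambda^{-3/2}(1+|\delta_k|^7))$. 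It then remains only to verify the algebraic identity $v + \tfrac{u^2}{2} = \tfrac1\lambda\big(\tfrac{1}{72}\delta_k^6 - \tfrac16\delta_k^4 + \tfrac38\delta_k^2 - \tfrac1{12}\big)$, which is precisely the order-$\lambda^{-1}$ polynomial in \eqref{lem:LLT.Poisson.expansion}.

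The main obstacle is the bookkeeping in this last step rather than any single estimate: in the extreme regime $|\delta_k| \asymp \lambda^{1/6}$ the quantities $u$, $v$ and the individual monomials of the order-$\lambda^{-1}$ polynomial are all merely $\OO(1)$, so one must check term by term that each discarded piece is genuinely dominated by $e^{B^3}\lambda^{-3/2}(1+|\delta_k|^9)$ — this is where the exponents $5$ and $9$ and the factor $e^{B^3}$ in the statement originate, and one must be careful that all implied constants remain universal (the $B$- and $\eta$-dependence being made explicit). By contrast, the Taylor expansion of $(1+x)\log(1+x)$ and the control of every remainder through the single one-sided bound $1+x \geq \eta$ (which keeps the arguments $1+\xi$ of the remainder terms bounded away from $0$) are routine once the substitution $x = \delta_k/\sqrt\lambda$ is in place.
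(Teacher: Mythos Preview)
Your proposal is correct and follows essentially the same route as the paper's proof: Stirling's formula followed by Taylor expansion of $(1+x)\log(1+x)$, $\log(1+x)$ and $(1+x)^{-1}$ with Lagrange remainders controlled through the one-sided bound $1+x\geq\eta$, and then exponentiation for the second claim. The only notable difference is in the exponentiation bookkeeping: the paper applies the single estimate $e^y = 1 + y + \tfrac{y^2}{2} + \OO(e^{\widetilde B}y^3)$ (valid for $y\leq\widetilde B$) directly to the entire right-hand side of \eqref{lem:LLT.Poisson.expansion.log}, taking $\eta = 1 - \lambda^{-1/3}B$ and $\widetilde B = 1+B^3$, whereas you split $y=u+v+R$ and expand $e^u$ and $e^{v+R}$ separately; the paper's version is a bit shorter and avoids the cross-term accounting you flag as the main obstacle, but the two are otherwise equivalent.
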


    \begin{remark}
        The advantage of the expansion in \eqref{eq:LLT.Poisson.more.precise.1} is that it is more precise to estimate $P_{\lambda}(k)$ alone if the control of the ratio $P_{\lambda}(k) / \frac{1}{\sqrt{\lambda}} \phi(\delta_k)$ is not needed. It also follows from a more general Fourier method for lattice random variables.
        The advantage of Lemma~\ref{lem:LLT.Poisson} however is that the methods are far more elementary (we only appeal to Taylor expansions and Stirling's formula) and the proof is otherwise completely self-contained, so the reader can easily verify its validity.
        In contrast, the approximation \eqref{eq:LLT.Poisson.more.precise.1} ultimately comes from difficult estimates of Fourier analysis found in \cite{MR14626}, which itself relies on results from \cite{doi:10.1080/03461238.1928.10416862,Cramer_1937_book}.
        Note that the approximations in Lemma~\ref{lem:LLT.Poisson} are not strictly comparable to Equations \eqref{eq:LLT.Poisson.more.precise.1} and \eqref{eq:LLT.Poisson.more.precise.1.next}.
        This is because Lemma~\ref{lem:LLT.Poisson} approximates the ratio $P_{\lambda}(k) / \frac{1}{\sqrt{\lambda}} \phi(\delta_k)$ and the $\oo(\lambda^{-2})$ error in Equations \eqref{eq:LLT.Poisson.more.precise.1} and \eqref{eq:LLT.Poisson.more.precise.1.next} is not relative to $\phi(\delta_k)$, which gives a very poor control when trying to approximate the ratio.
        As explained below \eqref{eq:LLT.Poisson.more.precise.1.next.error}, it doesn't seem possible to derive an approximation of $P_{\lambda}(k) / \frac{1}{\sqrt{\lambda}} \phi(\delta_k)$, with a good control on the error, from the results presented in \cite[Chapter IV]{MR14626} and \cite{MR207011}.
        The approximation of the ratio turns out to be crucial for our application in Section~\ref{sec:deficiency.distance}; this becomes clear if one looks at Equations~\eqref{eq:I.plus.II.plus.III} and \eqref{eq:estimate.I.begin} in the proof of Theorem~\ref{thm:prelim.Carter}.
    \end{remark}

    \begin{proof}
        By expanding the left-hand side of \eqref{lem:LLT.Poisson.expansion.log}, we have
        \begin{equation}
            \log\bigg(\frac{P_{\lambda}(k)}{\frac{1}{\sqrt{\lambda}} \phi(\delta_k)}\bigg) = \frac{1}{2} \log (2\pi) + (k + \tfrac{1}{2}) \log \lambda - \log k! - \lambda + \frac{1}{2} \delta_k^2.
        \end{equation}
        Stirling's formula yields
        \begin{equation}
            \log k! = \frac{1}{2} \log (2\pi) + (k + \tfrac{1}{2}) \log k - k + \frac{1}{12 k} + \OO(k^{-3}),
        \end{equation}
        (see e.g.\ \cite[p.257]{MR0167642}).
        Hence, we get
        \begin{align}\label{eq:big.equation}
            \log\bigg(\frac{P_{\lambda}(k)}{\frac{1}{\sqrt{\lambda}} \phi(\delta_k)}\bigg)
            &= - k \log \bigg(\frac{k}{\lambda}\bigg) - \frac{1}{2} \log \bigg(\frac{k}{\lambda}\bigg) \notag \\[-2mm]
            &\quad+ (k - \lambda) + \frac{1}{2} \delta_k^2 - \frac{1}{12 \lambda} \bigg(\frac{k}{\lambda}\bigg)^{-1} + \OO\bigg(\frac{1}{\lambda^3} \bigg(\frac{k}{\lambda}\bigg)^{\hspace{-1mm}-3\hspace{0.2mm}}\bigg) \notag \\[1mm]
            &= - \lambda \bigg(1 + \frac{\delta_k}{\sqrt{\lambda}}\bigg) \log \bigg(1 + \frac{\delta_k}{\sqrt{\lambda}}\bigg) \notag \\
            &\quad+ (k - \lambda) + \frac{1}{2} \delta_k^2 - \frac{1}{2} \log \bigg(1 + \frac{\delta_k}{\sqrt{\lambda}}\bigg) \notag \\
            &\quad- \frac{1}{12 \lambda} \bigg(1 + \frac{\delta_k}{\sqrt{\lambda}}\bigg)^{-1} + \OO\bigg(\frac{1}{\lambda^3} \bigg(1 + \frac{\delta_k}{\sqrt{\lambda}}\bigg)^{\hspace{-1mm}-3\hspace{0.2mm}}\bigg).
        \end{align}
        Now, note that for $y \geq \eta - 1$, Lagrange's error bound for Taylor expansions yields
        \begin{equation}
            \begin{aligned}
                (1 + y) \log (1 + y) &= y + \frac{y^2}{2} - \frac{y^3}{6} + \frac{y^4}{12} + \OO\bigg(\frac{y^5}{\eta^4}\bigg), \\
                \log (1 + y) &= y - \frac{y^2}{2} + \OO\bigg(\frac{y^3}{\eta^3}\bigg), \\
                (1 + y)^{-1} &= 1 + \OO\bigg(\frac{y}{\eta^2}\bigg).
            \end{aligned}
        \end{equation}
        By applying these estimates in \eqref{eq:big.equation}, we obtain
        \begin{align}\label{eq:big.equation.2}
            \log\bigg(\frac{P_{\lambda}(k)}{\frac{1}{\sqrt{\lambda}} \phi(\delta_k)}\bigg)
            &= - \lambda \left\{\frac{\delta_k}{\sqrt{\lambda}} + \frac{1}{2} \bigg(\frac{\delta_k}{\sqrt{\lambda}}\bigg)^2 - \frac{1}{6} \bigg(\frac{\delta_k}{\sqrt{\lambda}}\bigg)^3 + \frac{1}{12} \bigg(\frac{\delta_k}{\sqrt{\lambda}}\bigg)^4 + \OO\bigg(\frac{(\delta_k / \sqrt{\lambda})^5}{\eta^4}\bigg)\right\} \notag \\
            &\quad+ (k - \lambda) + \frac{1}{2} \delta_k^2 - \frac{1}{2} \left\{\frac{\delta_k}{\sqrt{\lambda}} - \frac{1}{2} \bigg(\frac{\delta_k}{\sqrt{\lambda}}\bigg)^2 + \OO\bigg(\frac{(\delta_k / \sqrt{\lambda})^3}{\eta^3}\bigg)\right\} \notag \\
            &\quad- \frac{1}{12 \lambda} \left\{1 + \OO\bigg(\frac{\delta_k / \sqrt{\lambda}}{\eta^2}\bigg)\right\} + \OO\bigg(\frac{1}{\lambda^3 \eta^3}\bigg).
        \end{align}
        The terms $k - \lambda$ and $\tfrac{1}{2} \delta_k^2$ cancel out with the first two terms inside the braces on the first line.
        Therefore,
        \begin{align}\label{eq:big.equation.3}
            \log\bigg(\frac{P_{\lambda}(k)}{\frac{1}{\sqrt{\lambda}} \phi(\delta_k)}\bigg)
            &= \left\{\frac{1}{6} \frac{\delta_k^3}{\sqrt{\lambda}} - \frac{1}{12} \frac{\delta_k^4}{\lambda} + \OO\bigg(\frac{\delta_k^5}{\lambda^{3/2} \eta^4}\bigg)\right\} \notag \\[-0.5mm]
            &\quad+ \left\{-\frac{1}{2} \frac{\delta_k}{\sqrt{\lambda}} + \frac{1}{4} \frac{\delta_k^2}{\lambda} + \OO\bigg(\frac{\delta_k^3}{\lambda^{3/2} \eta^3}\bigg)\right\} - \frac{1}{12 \lambda} + \OO\bigg(\frac{1 + |\delta_k|}{\lambda^{3/2} \eta^3}\bigg) \notag \\[2mm]
            &= \frac{\frac{1}{6} \delta_k^3 - \frac{1}{2} \delta_k}{\sqrt{\lambda}} + \frac{-\frac{1}{12} \delta_k^4 + \frac{1}{4} \delta_k^2 - \frac{1}{12}}{\lambda} + \OO\bigg(\frac{1 + |\delta_k|^5}{\lambda^{3/2} \eta^4}\bigg),
        \end{align}
        which proves \eqref{lem:LLT.Poisson.expansion.log}.
        To obtain \eqref{lem:LLT.Poisson.expansion} and conclude the proof, we take the exponential on both sides of the last equation with $\eta = 1 - \lambda^{-1/3} B$, and we expand the right-hand side with
        \begin{equation}\label{eq:Taylor.exponential}
            e^y = 1 + y + \frac{y^2}{2} + \OO(e^{\widetilde{B}} y^3), \quad \text{for } -\infty < y \leq \widetilde{B}.
        \end{equation}
        Note that, for $\lambda$ large enough and uniformly for $|\delta_k| \leq \lambda^{1/6} B$, the last line in \eqref{eq:big.equation.3} satisfies $|\eqref{eq:big.equation.3}| \leq \frac{1 + |\delta_k|^3}{\sqrt{\lambda}} \leq 1 + B^3$.
        When this bound is taken as $y$ in \eqref{eq:Taylor.exponential}, it explains the error in \eqref{lem:LLT.Poisson.expansion}.
        This ends the proof.
    \end{proof}

\section{Applications}\label{sec:applications}

    In this section, we present two applications of Lemma~\ref{lem:LLT.Poisson} related to the Le Cam distance between Poisson and Gaussian experiments (Section~\ref{sec:deficiency.distance}) and asymptotic properties of Bernstein estimators with Poisson weights (Section~\ref{sec:Bernstein.estimators}).

    \subsection{The Le Cam distance between Poisson and Gaussian experiments}\label{sec:deficiency.distance}

        In \cite{MR1922539}, the author finds an upper bound on the Le Cam distance (called $\Delta$-distance in \cite{MR1784901})
        between multinomial and multivariate Gaussian experiments.
        Carter achieves his goal by looking at the total variation between the measure of a multinomial vector for which the components were jittered by uniforms and a multivariate Gaussian measure with the same covariance profile.
        Carter's bound was later improved in \cite{arXiv:2001.08512} by developing a precise local limit theorem for the multinomial distribution (analogous to Lemma~\ref{lem:LLT.Poisson}) and by applying it to remove the inductive part of Carter's argument.
        We use Carter's strategy below by jittering a Poisson random variable to obtain an upper bound on the Le Cam distance between (unidimensional) Poisson and Gaussian experiments.
        For an excellent and concise review on Le Cam's theory for the comparison of statistical models, we refer the reader to \cite{MR3850766}.

        \vspace{3mm}
        The following theorem is analogous to Lemma~2 in \cite{MR1922539}.
        It bounds the total variation between a $\mathrm{Poisson}\hspace{0.2mm}(\lambda)$ random variable and a $\mathrm{Normal}\hspace{0.3mm}(\lambda, \lambda)$ random variable.
        The Le Cam bound appears in Theorem~\ref{thm:bound.deficiency.distance} right after.
        A reviewer pointed out to us that a proof of this result was already sketched, using a different approach and relying on multiple exercises with no solutions, in Section 1.4 of an unpublished set of lecture notes by \cite{Pollard_2010_Beijing_Chap_1}. Below, we give a complete proof.

        \begin{theorem}\label{thm:prelim.Carter}
            Let $K\sim \mathrm{Poisson}\hspace{0.2mm}(\lambda)$ and $U\sim \mathrm{Uniform}\hspace{0.2mm}(-\tfrac{1}{2},\tfrac{1}{2})$, where $K$ and $U$ are assumed independent.
            Define $Y \leqdef K + U$ and let $\widetilde{\PP}_{\lambda}$ be the law of $Y$.
            In particular, if $\PP_{\lambda}$ is the law of $K$, note that
            \begin{equation}
                \widetilde{\PP}_{\lambda}(B) = \int_{\N_0} \int_{(-\frac{1}{2},\frac{1}{2})} \ind_{B}(k + u) \rd u \, \PP_{\lambda}(\rd k), \quad B\in \mathscr{B}(\R).
            \end{equation}
            Let $\QQ_{\lambda}$ be the law of a $\mathrm{Normal}\hspace{0.3mm}(\lambda, \lambda)$ random variable.
            Then, for $\lambda > 0$ large enough, we have
            \begin{equation}
                \|\widetilde{\PP}_{\lambda} - \QQ_{\lambda}\| \leq \frac{C}{\sqrt{\lambda}},
            \end{equation}
            where $\| \cdot \|$ denotes the total variation norm and $C > 0$ is a universal constant.
        \end{theorem}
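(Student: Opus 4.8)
The plan is to realize $\widetilde{\PP}_\lambda$ and $\QQ_\lambda$ by densities on $\R$, split $\R$ into a central block and its complement, control the block with Lemma~\ref{lem:LLT.Poisson}, and discard the complement by Poisson and Gaussian concentration. Since $U$ is uniform on $(-\tfrac12,\tfrac12)$ and independent of $K$, the law of $Y=K+U$ has the step density $g_\lambda(y)\leqdef P_\lambda(k)$ for $y\in(k-\tfrac12,k+\tfrac12)$, $k\in\N_0$, while $\QQ_\lambda$ has density $\varphi_\lambda(y)\leqdef\frac{1}{\sqrt{2\pi\lambda}}\exp\big(-(y-\lambda)^2/(2\lambda)\big)=\frac{1}{\sqrt\lambda}\phi(\delta_y)$ with $\delta_y\leqdef(y-\lambda)/\sqrt\lambda$; note $\frac{1}{\sqrt\lambda}\phi(\delta_k)=\varphi_\lambda(k)$ at integers $k$. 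Hence $\|\widetilde{\PP}_\lambda-\QQ_\lambda\|\leq\int_\R|g_\lambda-\varphi_\lambda|\,\rd y$. Fixing $B>0$, setting $A_\lambda\leqdef[\lambda-B\lambda^{2/3},\lambda+B\lambda^{2/3}]$ (which lies in $(0,\infty)$ once $\lambda$ is large), and using $|g_\lambda-\varphi_\lambda|\leq g_\lambda+\varphi_\lambda$ off $A_\lambda$, I would split
\begin{equation}\label{eq:I.plus.II.plus.III}
    \|\widetilde{\PP}_\lambda-\QQ_\lambda\|\;\leq\;\int_{A_\lambda}|g_\lambda-\varphi_\lambda|\,\rd y\;+\;\widetilde{\PP}_\lambda(A_\lambda^c)\;+\;\QQ_\lambda(A_\lambda^c)\;\reqdef\;\mathrm{I}+\mathrm{II}+\mathrm{III}.
\end{equation}

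The terms $\mathrm{II}$ and $\mathrm{III}$ are the easy part. Because $|U|<\tfrac12$, we have $\mathrm{II}\leq\PP\big(|K-\lambda|>B\lambda^{2/3}-\tfrac12\big)$, and a standard Chernoff/Bennett bound for $\mathrm{Poisson}(\lambda)$ makes this $\leq\exp(-c\lambda^{1/3})$ for $\lambda$ large; likewise $\mathrm{III}=\PP\big(|Z|>B\lambda^{1/6}\big)\leq\exp\big(-\tfrac12B^2\lambda^{1/3}\big)$ for $Z\sim\mathrm{Normal}(0,1)$. Both are $\oo(\lambda^{-1/2})$ — in fact $\oo(\lambda^{-m})$ for every $m$ — so the whole task is to bound $\mathrm{I}$.

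For $\mathrm{I}$: if $y\in A_\lambda$ and $k$ is the nearest integer to $y$, then $|\delta_k|\leq B\lambda^{1/6}+\oo(1)$, so Lemma~\ref{lem:LLT.Poisson} applies on all of $A_\lambda$, and I would write
\begin{equation}\label{eq:estimate.I.begin}
    g_\lambda(y)-\varphi_\lambda(y)=\big(P_\lambda(k)-\tfrac{1}{\sqrt\lambda}\phi(\delta_k)\big)+\big(\varphi_\lambda(k)-\varphi_\lambda(y)\big)\;\reqdef\;(\mathrm a)+(\mathrm b).
\end{equation}
For $(\mathrm a)$, apply \eqref{lem:LLT.Poisson.expansion.log} with $\eta\in(0,1)$ a \emph{fixed} constant (so $\eta^{-4}$ is absorbed into $\OO(\cdot)$): the constraint $|\delta_k|\leq B\lambda^{1/6}$ keeps $\log(P_\lambda(k)/\varphi_\lambda(k))=\OO(1)$ on $A_\lambda$, so exponentiating and using $e^x-1=x+\OO(x^2)$ for bounded $x$ gives $P_\lambda(k)-\varphi_\lambda(k)=\frac{1}{\sqrt\lambda}\phi(\delta_k)\big[\frac{1}{\sqrt\lambda}(\tfrac16\delta_k^3-\tfrac12\delta_k)+\OO\big(\tfrac{1+\delta_k^6}{\lambda}\big)+\OO\big(\tfrac{1+|\delta_k|^5}{\lambda^{3/2}}\big)\big]$. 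Since $(\mathrm a)$ is constant on $(k-\tfrac12,k+\tfrac12)$, $\int_{A_\lambda}|(\mathrm a)|\,\rd y$ is a sum over the integers $k$ in the block, and for a polynomial $h$ and $\lambda\geq1$ each sum $\sum_k\frac{1}{\sqrt\lambda}\phi(\delta_k)|h(\delta_k)|$ is a mesh-$\lambda^{-1/2}$ Riemann sum bounded by a constant times $\EE|h(Z)|=\OO(1)$ (up to the local-limit error it is the moment $\EE|h(\delta_K)|$ of the near-Gaussian $\delta_K$, which is $\OO(1)$). Hence $\int_{A_\lambda}|(\mathrm a)|\,\rd y=\frac{1}{\sqrt\lambda}\OO\big(\EE|\tfrac16Z^3-\tfrac12Z|\big)+\OO(\lambda^{-1})+\OO\big(\lambda^{-3/2}\EE[1+|\delta_K|^5]\big)=\OO(\lambda^{-1/2})$. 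For $(\mathrm b)$, on each unit interval $(k-\tfrac12,k+\tfrac12)$ one has $|\varphi_\lambda(k)-\varphi_\lambda(y)|\leq\int_{k-1/2}^{k+1/2}|\varphi_\lambda'|$, so summing over $k$ gives $\int_{A_\lambda}|(\mathrm b)|\,\rd y\leq\|\varphi_\lambda'\|_{L^1(\R)}=\lambda^{-1/2}\|\phi'\|_{L^1(\R)}=\OO(\lambda^{-1/2})$. Thus $\mathrm{I}=\OO(\lambda^{-1/2})$, and with $\mathrm{II}+\mathrm{III}=\oo(\lambda^{-1/2})$ this yields $\|\widetilde{\PP}_\lambda-\QQ_\lambda\|\leq C\lambda^{-1/2}$.

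The main obstacle — and the reason Lemma~\ref{lem:LLT.Poisson} is used rather than the classical Fourier-analytic expansion \eqref{eq:LLT.Poisson.more.precise.1} — is the treatment of the remainder in $(\mathrm a)$: it must be expressed \emph{relative to} $\varphi_\lambda(k)=\frac{1}{\sqrt\lambda}\phi(\delta_k)$ and be \emph{polynomial} in $\delta_k$. An absolute remainder of order $\oo(\lambda^{-2})$, as in \eqref{eq:LLT.Poisson.more.precise.1}, becomes $\oo\big(\lambda^{-3/2}e^{\delta_k^2/2}\big)$ after division by $\varphi_\lambda(k)$, and summing it against the weights $\frac{1}{\sqrt\lambda}\phi(\delta_k)$ produces $\EE[e^{\delta_K^2/2}]=+\infty$; with the polynomial remainder $\OO(\lambda^{-3/2}(1+|\delta_k|^5))$ from \eqref{lem:LLT.Poisson.expansion.log}, the same sum is only $\OO\big(\lambda^{-3/2}\EE[1+|\delta_K|^5]\big)=\OO(\lambda^{-3/2})$. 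A lesser point to watch is that the leading Edgeworth correction $\tfrac16\delta_k^3-\tfrac12\delta_k$ has mean zero under $Z$ but enters $\mathrm{I}$ only through its modulus, so no cancellation is available — harmless, since $\lambda^{-1/2}\EE|\tfrac16Z^3-\tfrac12Z|$ is already exactly of the target order $\lambda^{-1/2}$. Beyond this, the decomposition \eqref{eq:I.plus.II.plus.III}--\eqref{eq:estimate.I.begin} reduces the proof to elementary Riemann-sum and mean-value estimates plus the two concentration inequalities.
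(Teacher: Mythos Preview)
Your argument is correct, but it follows a genuinely different path from the paper's. The paper does \emph{not} bound the $L^1$ distance of densities directly; instead it invokes a Hellinger/KL-type inequality (Carter's route),
\[
    \|\widetilde{\PP}_\lambda-\QQ_\lambda\|\;\leq\;\sqrt{\,2\,\PP(Y\in A^c)\;+\;\EE\!\Big[\log\tfrac{\rd\widetilde{\PP}_\lambda}{\rd\QQ_\lambda}(Y)\,\ind_{\{Y\in A\}}\Big]},
\]
and then shows that the expectation on the right is $\OO(\lambda^{-1})$. Its decomposition $(\mathrm{I})+(\mathrm{II})+(\mathrm{III})$ is of the \emph{log}-ratio, not of the density difference: $(\mathrm{I})$ is $\EE[\log(P_\lambda(K)/\varphi_\lambda(K))\,\ind_{\{K\in A\}}]$, $(\mathrm{II})$ handles the $\phi(\delta_K)/\phi(\delta_Y)$ correction, and $(\mathrm{III})$ is the symmetric-difference error. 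Because the target inside the square root is $\OO(\lambda^{-1})$, the paper must exploit \emph{cancellation}: the leading term $\lambda^{-1/2}\EE[\tfrac16\delta_K^3-\tfrac12\delta_K]$ is shown to be $\OO(\lambda^{-1})$ via the exact Poisson central moments (Lemma~\ref{lem:central.moments.Poisson} and Corollary~\ref{cor:central.moments.Poisson.on.events}), rather than being bounded in absolute value.

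Your direct $L^1$ splitting avoids this entirely: since the target is $\OO(\lambda^{-1/2})$ without a square root, you can afford $\int|(\mathrm a)|\asymp\lambda^{-1/2}$ from the Edgeworth term in absolute value, and you never need the moment identities in Corollary~\ref{cor:central.moments.Poisson.on.events}. Your $(\mathrm b)$ step---the mean-value bound $\int|\varphi_\lambda(k)-\varphi_\lambda(y)|\leq\|\varphi_\lambda'\|_{L^1}=\OO(\lambda^{-1/2})$---has no analogue in the paper; there the jitter enters only through $(\mathrm{II})$, which is the exact identity $\log(\phi(\delta_K)/\phi(\delta_Y))=(U^2+2U(K-\lambda))/(2\lambda)$ and yields $\tfrac{1}{24\lambda}$ plus exponentially small corrections. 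In short: the paper trades a harder intermediate target ($\OO(\lambda^{-1})$ under the root) for an approach that parallels \cite{MR1922539} and isolates the KL contribution; your route is more elementary for this specific statement but gives up the KL structure.
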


        \begin{proof}
            By the comparison of the total variation norm with the Hellinger distance on page 726 of \cite{MR1922539}, we already know that, for any Borel set $A\in \mathscr{B}(\R)$,
            \begin{equation}\label{eq:first.bound.total.variation}
                \|\widetilde{\PP}_{\lambda} - \QQ_{\lambda}\| \leq \sqrt{2 \, \PP(Y\in A^c) + \EE\bigg[\log\bigg(\frac{\rd \widetilde{\PP}_{\lambda}}{\rd \QQ_{\lambda}}(Y)\bigg) \, \ind_{\{Y\in A\}}\bigg]}.
            \end{equation}
            The idea is to choose a set $A^c$ that excludes the bulk of the Poisson distribution so that the probability $\PP(Y\in A^c)$ is small by a large deviation bound.
            Precisely, we can take the set
            \begin{equation}
                A^c \leqdef \big\{y\in (-\tfrac{1}{2},\infty) : |y - \lambda| >  \lambda^{2/3}\big\}.
            \end{equation}
            By applying a standard tail bound for the $\mathrm{Poisson}\hspace{0.2mm}(\lambda)$ distribution (see, e.g., \cite[p.26]{Pollard_2015_Mini_Empirical}), we have, for all $\lambda \geq 1$,
            \begin{equation}\label{eq:large.deviation.bound}
                \PP(Y\in A^c) \leq \PP\big(|K - \lambda| > \lambda^{2/3} - 1/2\big) \leq 100 \, \exp\Big(-\frac{\lambda^{1/3}}{100}\Big).
            \end{equation}
            For the expectation in \eqref{eq:first.bound.total.variation}, if $y \mapsto \widetilde{P}_{\lambda}(y)$ denotes the density function associated with $\widetilde{\PP}_{\lambda}$ (i.e.\ it is equal to $P_{\lambda}(k)$ whenever $k\in \N_0$ is closest to $y$), then
            \begin{align}\label{eq:I.plus.II.plus.III}
                \EE\bigg[\log\bigg(\frac{\rd \widetilde{\PP}_{\lambda}}{\rd \QQ_{\lambda}}(Y)\bigg) \, \ind_{\{Y\in A\}}\bigg]
                &=\EE\bigg[\log\bigg(\frac{\widetilde{P}_{\lambda}(Y)}{\frac{1}{\sqrt{\lambda}} \phi(\delta_{Y})}\bigg) \, \ind_{\{Y\in A\}}\bigg] \notag \\[1mm]
                &= \EE\bigg[\log\bigg(\frac{P_{\lambda}(K)}{\frac{1}{\sqrt{\lambda}} \phi(\delta_{K})}\bigg) \, \ind_{\{K\in A\}}\bigg] \notag \\
                &\quad+ \EE\bigg[\log\bigg(\frac{\frac{1}{\sqrt{\lambda}} \phi(\delta_{K})}{\frac{1}{\sqrt{\lambda}} \phi(\delta_{Y})}\bigg) \, \ind_{\{K\in A\}}\bigg] \notag \\[1mm]
                &\quad+ \EE\bigg[\log\bigg(\frac{P_{\lambda}(K)}{\frac{1}{\sqrt{\lambda}} \phi(\delta_{Y})}\bigg) \, (\ind_{\{Y\in A\}} - \ind_{\{K\in A\}})\bigg] \notag \\[1mm]
                &\reqdef (\mathrm{I}) + (\mathrm{II}) + (\mathrm{III}).
            \end{align}
            By Lemma~\ref{lem:LLT.Poisson} (note that $\big|\frac{\delta_k}{\sqrt{\lambda}}\big| \leq \frac{1}{2}$ for all $k\in A\cap \N_0$ if $\lambda$ is assumed large enough),
            \begin{equation}\label{eq:estimate.I.begin}
                \begin{aligned}
                    (\mathrm{I})
                    &= \frac{1}{\sqrt{\lambda}}  \cdot \EE\left[\bigg(\frac{1}{6} \cdot \frac{(K - \lambda)^3}{\lambda^{3/2}} - \frac{1}{2} \cdot \frac{K - \lambda}{\lambda^{1/2}}\bigg) \, \ind_{\{K\in A\}}\right] \\[1mm]
                    &\quad+ \OO\left(\frac{1}{\lambda} \Big(\frac{\EE[|K - \lambda|^4]}{\lambda^2} + \frac{\EE[|K - \lambda|^2]}{\lambda} + 1\Big)\right) + \OO(\lambda^{-3/2}).
                \end{aligned}
            \end{equation}
            By Lemma~\ref{lem:central.moments.Poisson}, the first $\OO(\cdot)$ term above is $\OO(\lambda^{-1})$.
            By Corollary~\ref{cor:central.moments.Poisson.on.events}, we can also control the $\asymp \lambda^{-1/2}$ term on the right-hand side of \eqref{eq:estimate.I.begin}.
            We obtain
            \begin{align}\label{eq:estimate.I}
                (\mathrm{I})
                &= \OO(\lambda^{-1}) + \OO\big(\lambda^{-1/2} (\PP(K\in A^c))^{1/2}\big) \notag \\[2mm]
                &= \OO\big(\lambda^{-1}\big).
            \end{align}
            For the term $(\mathrm{II})$ in \eqref{eq:I.plus.II.plus.III},
            \begin{align}\label{eq:estimate.II.prelim}
                \log\bigg(\frac{\frac{1}{\sqrt{\lambda}} \phi(\delta_{K})}{\frac{1}{\sqrt{\lambda}} \phi(\delta_{Y})}\bigg)
                &= \frac{(Y - \lambda)^2}{2 \lambda} - \frac{(K - \lambda)^2}{2 \lambda} \notag \\[-1mm]
                &= \frac{(Y - K)^2}{2 \lambda} + \frac{(Y - K) (K - \lambda)}{\lambda}.
            \end{align}
            With our assumption that $K$ and $Y - K = U\sim \mathrm{Uniform}\hspace{0.2mm}(-\tfrac{1}{2},\tfrac{1}{2})$ are independent, we get
            \begin{align}\label{eq:estimate.II}
                (\mathrm{II})
                &= \frac{1/12}{2 \lambda} - \frac{\EE[(Y - K)^2 \, \ind_{\{K\in A^c\}}]}{2 \lambda} - \frac{\EE[(Y - K) (K - \lambda) \, \ind_{\{K\in A^c\}}]}{\lambda} \notag \\[1mm]
                &= \frac{1}{24 \lambda} + \OO\bigg(\frac{\PP(K\in A^c)}{\lambda}\bigg) + \OO\bigg(\frac{\sqrt{\EE[(K - \lambda)^2]} \, \sqrt{\PP(K\in A^c)}}{\lambda}\bigg) \notag \\[2mm]
                &= \OO(\lambda^{-1}).
            \end{align}
            For the term $(\mathrm{III})$ in \eqref{eq:I.plus.II.plus.III}, we have the following crude bound from Lemma~\ref{lem:LLT.Poisson} and \eqref{eq:estimate.II.prelim}, on the symmetric difference event $\{Y\hspace{-0.5mm}\in\hspace{-0.5mm} A\} \hspace{0.3mm}\triangle\hspace{0.3mm} \{K\hspace{-0.5mm}\in\hspace{-0.5mm} A\}$,\hspace{0.5mm}%
            \footnote{On $\{Y\hspace{-0.5mm}\in\hspace{-0.5mm} A\} \hspace{0.3mm}\triangle\hspace{0.3mm} \{K\hspace{-0.5mm}\in\hspace{-0.5mm} A\}$, the condition $\big|\frac{\delta_K}{\sqrt{\lambda}}\big| \leq \frac{1}{2}$ is easily satisfied if $\lambda$ is assumed large enough, simply because $|Y - K| \leq \frac{1}{2}$.}
            \begin{align}
                \log\bigg(\frac{P_{\lambda}(K)}{\frac{1}{\sqrt{\lambda}} \phi(\delta_{Y})}\bigg)
                &= \log\bigg(\frac{P_{\lambda}(K)}{\frac{1}{\sqrt{\lambda}} \phi(\delta_{K})}\bigg) + \log\bigg(\frac{\frac{1}{\sqrt{\lambda}} \phi(\delta_{K})}{\frac{1}{\sqrt{\lambda}} \phi(\delta_{Y})}\bigg) \notag \\
                &= \OO\bigg(\frac{|K - \lambda|^3}{\lambda^2} + \frac{|K - \lambda|}{\lambda} + \frac{1}{\lambda}\bigg),
            \end{align}
            which yields, by Cauchy-Schwarz and Lemma~\ref{lem:central.moments.Poisson},
            \begin{align}\label{eq:estimate.III}
                (\mathrm{III})
                &= \OO\left(\sqrt{\frac{\EE[|K - \lambda|^6]}{\lambda^4} + \frac{\EE[|K - \lambda|^2] + 1}{\lambda^2}} \, \sqrt{\PP\big(\{Y\in A\} \triangle \{K\in A\}\big)}\right) \notag \\[1mm]
                &= \OO\left(\lambda^{-1/2} \, \sqrt{\PP\big(\{Y\in A\} \triangle \{K\in A\}\big)}\right).
            \end{align}
            Putting \eqref{eq:estimate.I}, \eqref{eq:estimate.II} and \eqref{eq:estimate.III} in \eqref{eq:I.plus.II.plus.III}, together with the exponential bound
            \begin{align}
                \PP\big(\{Y\in A\} \triangle \{K\in A\}\big)
                &\leq \PP(K\in A^c) + \PP(Y\in A^c) \notag \\[-1mm]
                &\leq 2 \cdot 100 \, \exp\Big(-\frac{\lambda^{1/3}}{100}\Big),
            \end{align}
            yields, as $\lambda\to \infty$,
            \begin{equation}\label{eq:I.plus.II.plus.III.end}
                \EE\bigg[\log\bigg(\frac{\rd \widetilde{\PP}_{\lambda}}{\rd \QQ_{\lambda}}(Y)\bigg) \, \ind_{\{Y\in A\}}\bigg] = (\mathrm{I}) + (\mathrm{II}) + (\mathrm{III}) = \OO(\lambda^{-1}).
            \end{equation}
            Now, putting \eqref{eq:large.deviation.bound} and \eqref{eq:I.plus.II.plus.III.end} together in \eqref{eq:first.bound.total.variation} gives the conclusion.
        \end{proof}

        By inverting the Markov kernel that jitters the Poisson random variable, we obtain the aforementioned Le Cam distance upper bound between Poisson and Gaussian experiments.

        \begin{theorem}[Bound on the Le Cam distance]\label{thm:bound.deficiency.distance}
            Let $\lambda_0 > 0$ be given, and define the experiments
            \vspace{-2mm}
            \begin{alignat*}{6}
                    &\mathscr{P}
                    &&\leqdef &&~\{\PP_{\lambda}\}_{\lambda \geq \lambda_0}, \quad &&\PP_{\lambda} ~\text{is the measure induced by } \mathrm{Poisson}\hspace{0.2mm}(\lambda), \\
                    &\mathscr{Q}\hspace{-0.5mm}
                    &&\leqdef &&~\{\QQ_{\lambda}\}_{\lambda \geq \lambda_0}, \quad &&\QQ_{\lambda} ~\text{is the measure induced by } \mathrm{Normal}\hspace{0.3mm}(\lambda, \lambda).
            \end{alignat*}
            Then, we have the following bound on the Le Cam distance $\Delta(\mathscr{P},\mathscr{Q})$ between $\mathscr{P}$ and $\mathscr{Q}$,
            \begin{equation}\label{eq:thm:bound.deficiency.distance.bound}
                \Delta(\mathscr{P},\mathscr{Q}) \leqdef \max\{\delta(\mathscr{P},\mathscr{Q}),\delta(\mathscr{Q},\mathscr{P})\} \leq \frac{C}{\sqrt{\lambda_0}},
            \end{equation}
            where $C > 0$ is a universal constant,
            \begin{equation}\label{eq:def:deficiency.one.sided}
                \begin{aligned}
                    \delta(\mathscr{P},\mathscr{Q})
                    &\leqdef \inf_{T_1} \sup_{\lambda \geq \lambda_0} \bigg\|\int_{\N_0} T_1(k, \cdot \, ) \, \PP_{\lambda}(\rd k) - \QQ_{\lambda}\bigg\|, \\
                    \delta(\mathscr{Q},\mathscr{P})
                    &\leqdef \inf_{T_2} \sup_{\lambda \geq \lambda_0} \bigg\|\PP_{\lambda} - \int_{\R} T_2(y, \cdot \, ) \, \QQ_{\lambda}(\rd y)\bigg\|, \\
                \end{aligned}
            \end{equation}
            and the infima are taken, respectively, over all Markov kernels $T_1 : \N_0 \times \mathscr{B}(\R) \to [0,1]$ and $T_2 : \R \times \mathscr{B}(\N_0) \to [0,1]$.
        \end{theorem}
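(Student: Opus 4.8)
The plan is to reduce both one-sided deficiencies in \eqref{eq:def:deficiency.one.sided} to Theorem~\ref{thm:prelim.Carter} by exhibiting explicit Markov kernels that do not depend on $\lambda$, and then invoking the fact that the total variation norm does not increase under the pushforward by a measurable map (data processing).

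For $\delta(\mathscr{P},\mathscr{Q})$, I would take $T_1$ to be the jittering kernel already used in Theorem~\ref{thm:prelim.Carter}, namely
\[
    T_1(k,B) \leqdef \int_{(-\frac{1}{2},\frac{1}{2})} \ind_B(k+u)\, \rd u, \qquad k\in \N_0,\ B\in \mathscr{B}(\R),
\]
so that $\int_{\N_0} T_1(k,\cdot)\,\PP_\lambda(\rd k) = \widetilde{\PP}_\lambda$ for every $\lambda$. Since $T_1$ is independent of $\lambda$, Theorem~\ref{thm:prelim.Carter} gives at once $\delta(\mathscr{P},\mathscr{Q}) \le \sup_{\lambda \ge \lambda_0} \|\widetilde{\PP}_\lambda - \QQ_\lambda\| \le \sup_{\lambda \ge \lambda_0} C/\sqrt{\lambda} = C/\sqrt{\lambda_0}$, as soon as $\lambda_0$ exceeds the (universal) threshold in that theorem.

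For $\delta(\mathscr{Q},\mathscr{P})$ the idea is to invert the jittering by rounding. Let $R : \R \to \N_0$ be the Borel map sending $y$ to the nearest nonnegative integer, say $R(y) \leqdef \max\{0, \lfloor y + \tfrac{1}{2}\rfloor\}$, and let $T_2(y,\cdot) \leqdef \delta_{R(y)}(\cdot)$ be the associated deterministic (hence Markov) kernel, so that $\int_{\R} T_2(y,\cdot)\,\QQ_\lambda(\rd y) = R_\ast \QQ_\lambda$, the image measure of $\QQ_\lambda$ under $R$. The key point is that $R$ is a left inverse of the jittering map on $\N_0$: for $u\in(-\tfrac{1}{2},\tfrac{1}{2})$ one has $R(k+u) = k$ for every $k\in \N_0$ (for $k\ge 1$ because $k+u\in(k-\tfrac{1}{2},k+\tfrac{1}{2})$, and for $k=0$ because $u\in(-\tfrac{1}{2},\tfrac{1}{2})$ rounds to $0$). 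Hence $R_\ast \widetilde{\PP}_\lambda = \PP_\lambda$, and the data processing inequality for the total variation norm yields, for every $\lambda$,
\[
    \Big\|\PP_\lambda - \int_{\R} T_2(y,\cdot)\,\QQ_\lambda(\rd y)\Big\| = \|R_\ast \widetilde{\PP}_\lambda - R_\ast \QQ_\lambda\| \le \|\widetilde{\PP}_\lambda - \QQ_\lambda\| \le \frac{C}{\sqrt{\lambda}}.
\]
Taking the supremum over $\lambda \ge \lambda_0$ (again $T_2$ is $\lambda$-free) gives $\delta(\mathscr{Q},\mathscr{P}) \le C/\sqrt{\lambda_0}$, and plugging both estimates into $\Delta(\mathscr{P},\mathscr{Q}) = \max\{\delta(\mathscr{P},\mathscr{Q}),\delta(\mathscr{Q},\mathscr{P})\}$ finishes the proof.

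I do not expect a serious obstacle here: the whole argument is a short wrapper around Theorem~\ref{thm:prelim.Carter}. The two points deserving a line of care are, first, that when $\lambda_0$ is below the universal threshold $\lambda_\ast$ of Theorem~\ref{thm:prelim.Carter} one must handle $\lambda \in [\lambda_0,\lambda_\ast]$ separately, bounding each deficiency term crudely by the total variation norm of a difference of probability measures, a universal constant, which is still $\le C'/\sqrt{\lambda_0}$ for a suitably enlarged universal $C'$ because $\lambda_\ast$ is universal; and second, the verification that rounding is an exact left inverse of jittering on all of $\N_0$ (including the $k=0$ boundary case), so that the data processing step lands precisely on $\PP_\lambda$ rather than on some approximation of it.
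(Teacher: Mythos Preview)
Your proposal is correct and follows essentially the same approach as the paper: the jittering kernel for $\delta(\mathscr{P},\mathscr{Q})$ and the rounding kernel for $\delta(\mathscr{Q},\mathscr{P})$, with the data processing inequality replacing the paper's equivalent chain $\|T_2^\star T_1^\star \PP_\lambda - T_2^\star \QQ_\lambda\| \le \|T_1^\star \PP_\lambda - \QQ_\lambda\|$. Your explicit handling of the small-$\lambda_0$ regime and of the $k=0$ boundary in the rounding map are details the paper leaves implicit.
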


        \begin{proof}
            By Theorem~\ref{thm:prelim.Carter}, we get the desired bound on $\delta(\mathscr{P},\mathscr{Q})$ by choosing the Markov kernel $T_1^{\star}$ that adds $U$ to $K$, namely
            \begin{equation}
                \begin{aligned}
                    T_1^{\star}(k,B) \leqdef \int_{(-\frac{1}{2},\frac{1}{2})} \ind_{B}(k + u) \rd u, \quad k\in \N_0, ~B\in \mathscr{B}(\R).
                \end{aligned}
            \end{equation}
            To get the bound on $\delta(\mathscr{Q},\mathscr{P})$, it suffices to consider a Markov kernel $T_2^{\star}$ that inverts the effect of $T_1^{\star}$, i.e.\ rounding off $Z\sim \mathrm{Normal}\hspace{0.3mm}(\lambda, \lambda)$ to the nearest integer.
            Then, as explained in Section~5 of \cite{MR1922539}, we get
            \begin{align}
                \delta(\mathscr{Q},\mathscr{P})
                &\leq \sup_{\lambda\geq \lambda_0} \bigg\|\PP_{\lambda} - \int_{\R} T_2^{\star}(z, \cdot \, ) \, \QQ_{\lambda}(\rd z)\bigg\| \notag \\
                &= \sup_{\lambda\geq \lambda_0} \bigg\|\int_{\R} T_2^{\star}(z, \cdot \, ) \int_{\N_0} T_1^{\star}(k, \rd z) \, \PP_{\lambda}(\rd k) - \int_{\R} T_2^{\star}(z, \cdot \, ) \, \QQ_{\lambda}(\rd z)\bigg\| \notag \\
                &\leq \sup_{\lambda\geq \lambda_0} \bigg\|\int_{\N_0} T_1^{\star}(k, \cdot \, ) \, \PP_{\lambda}(\rd k) - \QQ_{\lambda}\bigg\|,
            \end{align}
            and we get the same bound by Theorem~\ref{thm:prelim.Carter}.
        \end{proof}

        If we consider the following Gaussian experiment with constant variance

        \vspace{-3mm}
        \begin{alignat*}{6}
            &\mathscr{Q}^{\star}\hspace{-0.5mm}
            &&\leqdef &&~\{\QQ_{\lambda}^{\star}\}_{\lambda \geq \lambda_0}, \quad &&\QQ_{\lambda}^{\star} ~\text{is the measure induced by } \mathrm{Normal}\hspace{0.3mm}(\sqrt{\lambda}, 1/4),
        \end{alignat*}

        \vspace{2mm}
        \noindent
        then the argument in \cite[Section~7]{MR1922539} can be adapted to show that
        \begin{equation}\label{eq:LeCam.distance.indep.normals}
            \Delta(\mathscr{Q},\mathscr{Q}^{\star}) \leq \frac{C}{\sqrt{\lambda_0}},
        \end{equation}
        using a variance stabilizing transformation,
        with proper adjustments to the deficiencies in \eqref{eq:def:deficiency.one.sided}.
        As a direct consequence, we obtain the following corollary.

        \begin{corollary}\label{cor:main.theorem.consequence}
            With the same notation as in Theorem~\ref{thm:bound.deficiency.distance}, we have
            \begin{equation}
                \Delta(\mathscr{P},\mathscr{Q}^{\star}) \leq \frac{C}{\sqrt{\lambda_0}},
            \end{equation}
            where $C > 0$ is a universal constant.
        \end{corollary}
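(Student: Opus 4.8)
\textbf{Proof proposal for Corollary~\ref{cor:main.theorem.consequence}.}

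The plan is to obtain the corollary as an immediate consequence of the triangle inequality for the Le Cam distance, feeding in Theorem~\ref{thm:bound.deficiency.distance} and the bound \eqref{eq:LeCam.distance.indep.normals}. The key structural fact is that the one-sided deficiency is sub-additive under composition of Markov kernels: if $T_1$ nearly realizes $\mathscr{Q}$ from $\mathscr{P}$ (up to total variation $\varepsilon_1$, uniformly in $\lambda \geq \lambda_0$) and $S$ nearly realizes $\mathscr{Q}^{\star}$ from $\mathscr{Q}$ (up to $\varepsilon_2$), then the composed kernel $S \circ T_1$ nearly realizes $\mathscr{Q}^{\star}$ from $\mathscr{P}$ up to $\varepsilon_1 + \varepsilon_2$, because the total variation norm contracts under any Markov kernel. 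Running the same argument in the opposite direction and taking maxima gives $\Delta(\mathscr{P},\mathscr{Q}^{\star}) \leq \Delta(\mathscr{P},\mathscr{Q}) + \Delta(\mathscr{Q},\mathscr{Q}^{\star})$; this triangle inequality for $\Delta$ is standard (see, e.g., \cite{MR3850766}).

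First I would invoke Theorem~\ref{thm:bound.deficiency.distance} to get $\Delta(\mathscr{P},\mathscr{Q}) \leq C_1 / \sqrt{\lambda_0}$. Next I would invoke \eqref{eq:LeCam.distance.indep.normals}, namely $\Delta(\mathscr{Q},\mathscr{Q}^{\star}) \leq C_2 / \sqrt{\lambda_0}$, which follows by adapting the variance-stabilizing-transformation argument of \cite[Section~7]{MR1922539}: one pushes forward $\mathrm{Normal}(\lambda,\lambda)$ through the map $x \mapsto \sqrt{x}$ (truncated away from the origin, since the Gaussian charges $(-\infty,0)$ with exponentially small mass) and compares it with $\mathrm{Normal}(\sqrt{\lambda},1/4)$ in total variation, uniformly in $\lambda \geq \lambda_0$, and conversely for the inverse transformation. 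Adding the two estimates and setting $C \leqdef C_1 + C_2$ then yields the claimed bound.

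The only part that is not pure bookkeeping is the input \eqref{eq:LeCam.distance.indep.normals}: the work there is to check that a second-order Taylor expansion of $x \mapsto \sqrt{x}$ around $x = \lambda$, combined with Gaussian tail bounds controlling the region where this linearization is inaccurate, delivers a total-variation error of order $\lambda_0^{-1/2}$ that is uniform over $\lambda \geq \lambda_0$ (and likewise for the reverse kernel). Once \eqref{eq:LeCam.distance.indep.normals} is granted, as it is in the text, the corollary itself presents no further obstacle — it is one application of the triangle inequality. \qed
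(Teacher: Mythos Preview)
Your proposal is correct and matches the paper's own proof exactly: the paper simply invokes Theorem~\ref{thm:bound.deficiency.distance}, Equation~\eqref{eq:LeCam.distance.indep.normals}, and the triangle inequality for the pseudometric $\Delta(\cdot,\cdot)$. Your additional explanation of why $\Delta$ satisfies the triangle inequality (via contractivity of total variation under Markov kernels) and the sketch of how \eqref{eq:LeCam.distance.indep.normals} is obtained are accurate elaborations, but the argument is the same.
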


        \begin{proof}
            This follows from Theorem~\ref{thm:bound.deficiency.distance}, Equation \eqref{eq:LeCam.distance.indep.normals} and the triangle inequality for the pseudometric $\Delta(\cdot\hspace{0.3mm}, \cdot)$.
        \end{proof}

    \subsection{Asymptotic properties of Bernstein estimators with Poisson weights}\label{sec:Bernstein.estimators}

        Bernstein estimators generally refer to a class of estimators for density functions and cumulative distribution functions (c.d.f.s) where a weight is added to the empirical density or empirical c.d.f.\ to produce a smooth and a variable smoothing across the support of the target.
        These estimators are known to behave better than traditional kernel estimators (see, e.g., \cite{MR79873} and \cite{MR143282}) near the boundary of the support.
        The weight is always a discrete probability mass function seen as a function of one of its parameters.
        For instance, the weights are usually binomial (controlled by $p$) and Poisson (controlled by $\lambda$) when the support of the target is $[0,1]$ and $[0,\infty)$, respectively.
        Asymptotic properties of Bernstein estimators on compacts supports have been studied by many authors, see e.g.\ \cite{MR0397977,MR0638651,MR0726014,MR1293514,MR1910059,MR2270097,MR2351744,MR2662607,MR2960952,MR2925964,MR3174309,MR3474765,MR3983257,arXiv:2002.07758,arXiv:2006.11756}, just to name a few. For the interested reader, there is an extensive review in Section 2 of \cite{arXiv:2002.07758}.

        \vspace{3mm}
        In the literature, various asymptotic properties of Bernstein estimators with Poisson weights were also studied, namely: \cite{MR0574548,MR0638651} studied the bias, variance and mean squared error for the density estimator, and \cite{arXiv:2005.09994} studied the bias, variance, mean squared error, mean integrated squared error, asymptotic normality, uniform strong consistency and relative deficiency with respect to the empirical c.d.f.\ for the c.d.f.\ estimator.
        The estimators are defined as follows.
        Assume that the observations $X_1, X_2, \dots, X_n$ are independent, $F$ distributed (with density $f$) and supported on $[0,\infty)$.
        Then, for $n, m\in \N$, let
        \begin{equation}\label{eq:cdf.Bernstein.estimator}
            \hat{F}_{m,n}^S(x) \leqdef \sum_{k=0}^{\infty} \left\{\frac{1}{n} \sum_{i=1}^n \ind_{(-\infty, \frac{k}{m}]}(X_i)\right\} V_{k,m}(x), \quad x\geq 0,
        \end{equation}
        be the Bernstein c.d.f.\ estimator with the Poisson weights
        \begin{equation}
            V_{k,m}(x) \leqdef P_{m x}(k) = \frac{(m x)^k}{k!} e^{-m x}, \quad k \in \N_0,
        \end{equation}
        ($\hat{F}_{m,n}^S$ was introduced in \cite{arXiv:2005.09994} as the {\it Szasz estimator}, since it can be seen as the Szasz-Mirakyan operator applied to the empirical c.d.f.), and let
        \begin{equation}\label{eq:histogram.estimator}
            \hat{f}_{m,n}^S(x) \leqdef \sum_{k=0}^{\infty} \left\{\frac{m}{n} \sum_{i=1}^n \ind_{(\frac{k}{m}, \frac{k + 1}{m}]}(X_i)\right\} V_{k,m}(x), \quad x\geq 0,
        \end{equation}
        be the Bernstein density estimator with Poisson weights (which was introduced in \cite{MR0574548}).
        Assuming that $F$ and $f$ are respectively two-times and one-time continuously differentiable on $(0,\infty)$, the theorem below computes the asymptotics of the variance of both estimators. The variance of $\hat{f}_{m,n}^S$ was previously calculated using known asymptotics of modified Bessel functions of the first kind in \cite{MR0574548}, so \eqref{eq:thm:asymptotics.variance.density} gives an alternative proof.
        As for the variance of $\hat{F}_{m,n}^S$, it was incorrectly stated in the Theorem~5 of \cite[arXiv v.1]{arXiv:2005.09994}, so \eqref{eq:thm:asymptotics.variance.cdf} below fixes their original statement and proof.
        The subsequent arXiv versions of \cite{arXiv:2005.09994} were corrected following Theorem~\ref{thm:asymptotics.variance} and Lemma~\ref{lem:technical.sums.R} of the present paper.

        \begin{remark}\label{rem:incorrect.Leblanc}
            The error in the asymptotic expression of the variance in Theorem~5 of \cite[arXiv v.1]{arXiv:2005.09994} ultimately stems from the authors' adaptation of a method from \cite{MR2960952} to estimate the technical sum
            \vspace{1mm}
            \begin{equation*}
                \sum_{0 \leq k < \ell < \infty} (\tfrac{k}{m} - x) V_{k,m}(x) V_{\ell,m}(x).
            \end{equation*}
            In Lemma~2 (iv) of \cite{MR2960952}, a continuity correction from \cite{MR538319} was used to claim that
            \vspace{-1mm}
            \begin{equation}\label{eq:claim.Leblanc.2012.Lemma.2.4}
                \sum_{0 \leq k < \ell \leq m} (\tfrac{k}{m} - x) P_{k,m}(x) P_{\ell,m}(x) + \sqrt{\frac{x (1 - x)}{2\pi}} = \oo_x(1),
            \end{equation}
            where $P_{k,m}(x) \leqdef \binom{m}{k} x^k (1 - x)^{n-k}$ are Binomial weights.
            Unfortunately, there is an error in Leblanc's proof.
            Indeed, \cite[pp.935-936]{MR2960952} applied the expansion
            \begin{equation}\label{eq:expansion.Phi}
                \Phi(t) = \frac{1}{2} + \frac{t}{\sqrt{2\pi}} + \oo(|t|)
            \end{equation}
            (which is valid for small $t$'s) to the following result developed in \cite{MR538319}:
            \begin{equation}
                \sum_{\ell = k + 1}^m P_{\ell,m}(x) = 1 - \Phi(\delta_{k+1} - G_x(\delta_{k + 1/2})) + \OO_x(m^{-1}),
            \end{equation}
            where $G_x(\cdot)$ is a specific function and $\delta_{k+1} = (k + 1 - m x) [m x (1 - x)]^{-1/2}$. The problem is that $\delta_{k+1} - G_x(\delta_{k + 1/2})$ can go up to order $\asymp_x \sqrt{m}$ (because $k+1$ goes up to $m$ in the sum), which is way beyond the range where the expansion \eqref{eq:expansion.Phi} is valid. As a consequence, there are hidden contributions in Equation (9) of \cite{MR2960952}, which results in the discrepancy between \eqref{eq:claim.Leblanc.2012.Lemma.2.4} and \eqref{eq:claim.Leblanc.2012.Lemma.2.4.correction}, see Figure~\ref{fig:graph.error.mathematica}.

            \vspace{2mm}
            A quick simulation with \texttt{Mathematica} shows that the correct estimate is
            \vspace{-1mm}
            \begin{equation}\label{eq:claim.Leblanc.2012.Lemma.2.4.correction}
                \sum_{0 \leq k < \ell \leq m} (\tfrac{k}{m} - x) P_{k,m}(x) P_{\ell,m}(x) + \sqrt{\frac{x (1 - x)}{4\pi}} = \oo_x(1).
            \end{equation}
            The mathematical proof of (a generalization of) Equation~\eqref{eq:claim.Leblanc.2012.Lemma.2.4.correction} was also given in Lemma A.3 of \cite{arXiv:2002.07758} by applying a local limit theorem for the multinomial distribution developed in the same article.
            In Appendix~\ref{sec:error.propagation}, a list of articles and theses whose statements (and proofs) have been affected by this error is collected and appropriate fixes are suggested.
        \end{remark}

        \begin{figure}[ht]
            \centering
            \includegraphics[width=100mm]{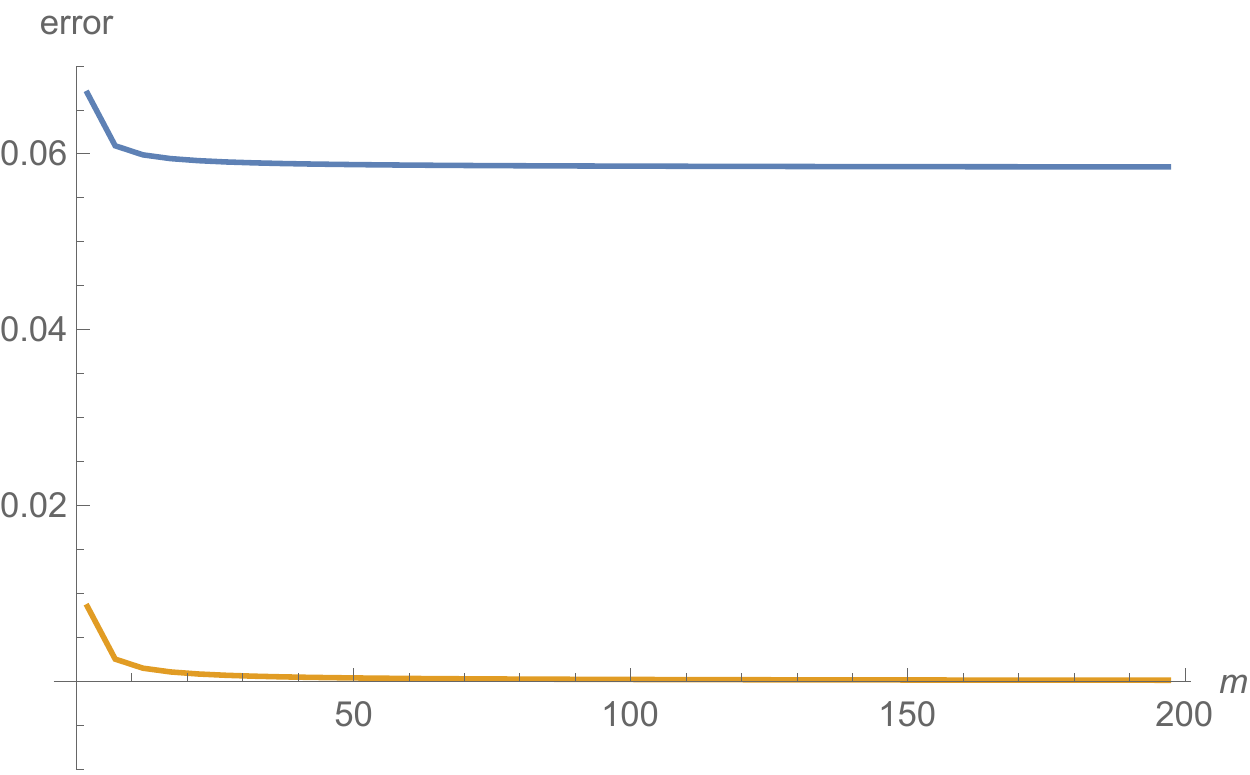}
            \caption{Graph of \eqref{eq:claim.Leblanc.2012.Lemma.2.4} (top curve) and \eqref{eq:claim.Leblanc.2012.Lemma.2.4.correction} (bottom curve) for $x = 1/2$.}
            \label{fig:graph.error.mathematica}
        \end{figure}

        Here are the asymptotics of the variance for the Bernstein c.d.f.\ and density estimators with Poisson weights (or Szasz estimators).

        \begin{theorem}\label{thm:asymptotics.variance}
            As $n\to \infty$, we have
            \begin{align}
                \VV(\hat{F}_{m,n}^S(x)) &= n^{-1} F(x) (1 - F(x)) - n^{-1} m^{-1/2} \sqrt{\frac{x}{\pi}} + \oo_x(n^{-1} m^{-1/2}), \label{eq:thm:asymptotics.variance.cdf} \\
                \VV(\hat{f}_{m,n}^S(x)) &= n^{-1} m^{1/2} \frac{f(x)}{\sqrt{4 \pi x}} + \oo_x(n^{-1} m^{1/2}). \label{eq:thm:asymptotics.variance.density}
            \end{align}
        \end{theorem}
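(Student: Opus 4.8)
The plan is to exploit that each estimator is a sample mean of i.i.d.\ functionals of one observation, so that its variance collapses to a single-observation variance, and then to feed the two resulting sums to the local limit theorem (Lemma~\ref{lem:LLT.Poisson}). Throughout I would fix $x>0$, set $\lambda\leqdef mx\to\infty$, let $N,N'$ be independent $\mathrm{Poisson}(mx)$ variables, and recall $\delta_k=(k-mx)/\sqrt{mx}$. Since $\hat{F}_{m,n}^S(x)=\tfrac1n\sum_{i=1}^n g_m(X_i;x)$ with $g_m(t;x)\leqdef\sum_{k\colon k/m\ge t}V_{k,m}(x)$, and $\hat{f}_{m,n}^S(x)=\tfrac mn\sum_{i=1}^n h_m(X_i;x)$ with $h_m(t;x)\leqdef\sum_k\ind_{(k/m,(k+1)/m]}(t)\,V_{k,m}(x)$, independence gives $\VV(\hat{F}_{m,n}^S(x))=\tfrac1n\VV(g_m(X_1;x))$ and $\VV(\hat{f}_{m,n}^S(x))=\tfrac{m^2}{n}\VV(h_m(X_1;x))$. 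Then, using the collapse $\ind_{(-\infty,k/m]}(X_1)\ind_{(-\infty,\ell/m]}(X_1)=\ind_{(-\infty,\min(k,\ell)/m]}(X_1)$, the fact that the intervals $(k/m,(k+1)/m]$ partition $(0,\infty)$, and Fubini, the first two moments of $g_m(X_1;x)$ become $\EE[F(N/m)]$ and $\EE[F(\min(N,N')/m)]$, while those of $h_m(X_1;x)$ become $\EE[F(\tfrac{N+1}{m})-F(\tfrac Nm)]=\OO_x(m^{-1})$ and $\sum_{k\ge 0}\bigl(F(\tfrac{k+1}{m})-F(\tfrac km)\bigr)V_{k,m}(x)^2$. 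So \eqref{eq:thm:asymptotics.variance.cdf} hinges on $\EE[F(\min(N,N')/m)]$, and \eqref{eq:thm:asymptotics.variance.density} on the $f$-weighted $\ell^2$-norm of the Poisson weights --- the two sums that (a generalization of) Lemma~\ref{lem:technical.sums.R} handles.

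For the c.d.f.\ estimator I would Taylor-expand $F$ to second order about $x$, using the Poisson central-moment bounds of Lemma~\ref{lem:central.moments.Poisson} and the exponentially small large-deviation tails to absorb the quadratic remainder and the part where $N/m$ strays from $x$; this yields $\EE[F(N/m)]=F(x)+\OO_x(m^{-1})$ (the linear term has mean zero) and $\EE[F(\min(N,N')/m)]=F(x)+f(x)\,\EE\bigl[\tfrac1m\min(N,N')-x\bigr]+\OO_x(m^{-1})$. Writing $\min(N,N')=\tfrac12\bigl((N+N')-|N-N'|\bigr)$ turns the key expectation into $-\tfrac1{2m}\,\EE|N-N'|$ --- which is the Poisson analogue of $2\sum_{k<\ell}(\tfrac km-x)V_{k,m}(x)V_{\ell,m}(x)$ from Remark~\ref{rem:incorrect.Leblanc}, up to the $\oo_x(m^{-1/2})$ diagonal sum $\sum_k(\tfrac km-x)V_{k,m}(x)^2$ --- and then Lemma~\ref{lem:LLT.Poisson} (or, more crudely, the asymptotic normality of $(N-N')/\sqrt{2mx}$ together with uniform integrability of its square) gives $\EE|N-N'|=2\sqrt{mx/\pi}+\oo_x(\sqrt m)$, hence $\EE[\tfrac1m\min(N,N')-x]=-m^{-1/2}\sqrt{x/\pi}+\oo_x(m^{-1/2})$. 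Subtracting $\EE[F(N/m)]^2=F(x)^2+\OO_x(m^{-1})$ and dividing by $n$ gives \eqref{eq:thm:asymptotics.variance.cdf}, with $F'(x)=f(x)$.

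For the density estimator I would use $F(\tfrac{k+1}{m})-F(\tfrac km)=\tfrac1m f(k/m)+\OO_x(m^{-2})$ for $k/m$ in a fixed neighbourhood of $x$, so that $\EE[h_m(X_1;x)^2]=\tfrac1m\sum_k f(k/m)\,V_{k,m}(x)^2+\oo_x(m^{-3/2})$ once the negligible boundary and tail ranges (where $f$ is integrable against the exponentially small Poisson mass) are discarded. By Lemma~\ref{lem:LLT.Poisson}, $V_{k,m}(x)^2=\tfrac1{mx}\phi(\delta_k)^2(1+o(1))$ on the bulk, and since $\delta_k$ advances in steps of $(mx)^{-1/2}$ the resulting Riemann sum converges: $\sum_k f(k/m)V_{k,m}(x)^2=\tfrac{f(x)}{\sqrt{mx}}\int_{\R}\phi(y)^2\,\rd y+\oo_x\bigl((mx)^{-1/2}\bigr)=\tfrac{f(x)}{2\sqrt{\pi m x}}+\oo_x(m^{-1/2})$, using $\int_{\R}\phi^2=(2\sqrt\pi)^{-1}$. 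Multiplying by $m^2/n$ produces the leading term $n^{-1}m^{1/2}f(x)/\sqrt{4\pi x}$ of \eqref{eq:thm:asymptotics.variance.density}, while $\tfrac{m^2}{n}\,\EE[h_m(X_1;x)]^2=\OO_x(n^{-1})=\oo_x(n^{-1}m^{1/2})$ is negligible.

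The hard part will be pinning down the exact numerical constants in the two leading corrections --- the factor in $\EE|N-N'|=2\sqrt{mx/\pi}\,(1+o(1))$ for the c.d.f.\ estimator and in $\sum_k V_{k,m}(x)^2=(2\sqrt{\pi m x})^{-1}(1+o(1))$ for the density estimator --- while simultaneously certifying that every remainder, boundary and tail term is genuinely $\oo_x(\cdot)$ on the relevant scale. This is exactly where a \emph{precise} local limit theorem (Lemma~\ref{lem:LLT.Poisson}, rather than a bare CLT) together with the Poisson moment estimates are indispensable; it is also precisely the step at which the $\sqrt2$-type discrepancy in \cite{MR2960952} crept in, which is why so much care surrounds Remark~\ref{rem:incorrect.Leblanc}.
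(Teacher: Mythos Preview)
Your proposal is correct and structurally mirrors the paper's proof: both reduce to a single-observation variance, Taylor-expand $F$ (resp.\ $f$) about $x$, and isolate the same two key quantities --- $\sum_{k,\ell}(\tfrac{k\wedge\ell}{m}-x)V_{k,m}(x)V_{\ell,m}(x)$ for the c.d.f.\ estimator and $\sum_k V_{k,m}(x)^2$ for the density estimator --- which the paper packages as Lemma~\ref{lem:technical.sums.R}. The one genuine tactical difference is how you extract the constant $-\sqrt{x/\pi}$ for the c.d.f.\ correction: you rewrite $\EE\bigl[\tfrac{1}{m}\min(N,N')-x\bigr]=-\tfrac{1}{2m}\,\EE|N-N'|$ and evaluate $\EE|N-N'|$ via the CLT for the Skellam variable $N-N'$ together with $L^2$-boundedness (hence uniform integrability), whereas the paper computes the equivalent sum $\widetilde{R}_{1,m}^S(x)$ by applying Lemma~\ref{lem:LLT.Poisson} to pass to a double Gaussian integral and then integrating by parts. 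Your route is a bit more elementary for that particular constant (it does not require the full strength of the local limit theorem there), while the paper's route has the advantage of using one uniform mechanism (Lemma~\ref{lem:LLT.Poisson}) for all of the technical sums in Lemma~\ref{lem:technical.sums.R}. Incidentally, your bookkeeping correctly keeps the factor $f(x)$ in front of the $m^{-1/2}\sqrt{x/\pi}$ correction --- the paper's own proof carries it as $f(x)\,\widetilde{R}_{1,m}^S(x)$ in the penultimate display but then drops it in the final line and in the theorem statement.
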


        \newpage
        In a first step, we need to estimate some technical sums using our local limit theorem.

        \begin{lemma}\label{lem:technical.sums.R}
            Let
            \begin{equation}
                \widetilde{R}_{1,m}^S \leqdef m^{1/2} \sum_{k,\ell=0}^{\infty} (\tfrac{k \wedge \ell}{m} - x) V_{k,m}(x) V_{\ell,m}(x), \quad x\in (0,\infty).
            \end{equation}
            Then, for any given $x\in (0,\infty)$,
            \begin{equation}\label{eq:lem:technical.sums.R.claim.1}
                \sup_{m\in \N} |\widetilde{R}_{1,m}^S(x)| \leq 2 \sqrt{x},
            \end{equation}
            and we have, as $m\to \infty$,
            \begin{align}
                m^{1/2} \sum_{k=0}^{\infty} V_{k,m}^2(x) &= \frac{1}{\sqrt{4 \pi x}} + \oo_x(1), \label{eq:lem:technical.sums.R.claim.2.1} \\
                m \sum_{k=0}^{\infty} V_{k,m}^3(x) &= \frac{1}{2\sqrt{3} \pi x} + \oo_x(1), \label{eq:lem:technical.sums.R.claim.2.2} \\
                \widetilde{R}_{1,m}^S(x) &= -\sqrt{\frac{x}{\pi}} + \oo_x(1). \label{eq:lem:technical.sums.R.claim.2.3}
            \end{align}
        \end{lemma}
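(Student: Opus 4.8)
The plan is to reduce all four statements to asymptotics of sums of the form $\sum_k P_{mx}(k)^p$ and $\sum_k (\tfrac{k}{m}-x)\,P_{mx}(k)^2$, and then to evaluate those sums by substituting the local limit theorem of Lemma~\ref{lem:LLT.Poisson} and passing to a Gaussian (Riemann-sum) integral. Throughout, write $\lambda = mx$, so that $V_{k,m}(x) = P_{\lambda}(k)$ and $\delta_k = (k-\lambda)/\sqrt{\lambda} = (k-mx)/\sqrt{mx}$. First I would handle the uniform bound \eqref{eq:lem:technical.sums.R.claim.1}. Symmetrizing in $(k,\ell)$, one has $k\wedge\ell = \tfrac12(k+\ell) - \tfrac12|k-\ell|$, and since $\sum_{k,\ell}(\tfrac{k+\ell}{2m}-x)V_{k,m}V_{\ell,m} = \mathbb{E}[\tfrac{K}{m}] - x = 0$ (because $\mathbb{E}[K]=\lambda=mx$), we get
\begin{equation*}
    \widetilde{R}_{1,m}^S(x) = -\frac{m^{1/2}}{2}\,\mathbb{E}\Big[\frac{|K-K'|}{m}\Big] = -\frac{1}{2\sqrt{m}}\,\mathbb{E}|K-K'|,
\end{equation*}
where $K,K'$ are i.i.d.\ $\mathrm{Poisson}(\lambda)$; then $\mathbb{E}|K-K'| \le \sqrt{\mathbb{E}(K-K')^2} = \sqrt{2\lambda} = \sqrt{2mx}$ by Jensen, giving $|\widetilde{R}_{1,m}^S(x)| \le \sqrt{2x}/\sqrt{2} \cdot \sqrt{2} = \dots$ — more carefully, $\tfrac{1}{2\sqrt m}\sqrt{2mx} = \sqrt{x/2} \le 2\sqrt{x}$, which is even stronger than claimed, and in particular is uniform in $m$.

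For the asymptotic statements I would split each sum at the large-deviation threshold $|\delta_k| \le \lambda^{1/6}B$ (say $B=1$), so that Lemma~\ref{lem:LLT.Poisson}, Equation~\eqref{lem:LLT.Poisson.expansion}, applies with a controlled error, and bound the complementary tail by a Poisson large-deviation estimate (as in \eqref{eq:large.deviation.bound} in the proof of Theorem~\ref{thm:prelim.Carter}), which is super-polynomially small and hence negligible. On the bulk, $V_{k,m}(x)^p = \lambda^{-p/2}\phi(\delta_k)^p\,(1 + \OO(\lambda^{-1/2}(1+|\delta_k|^3)))^p$. Summing over $k$ and recognizing a Riemann sum with mesh $\Delta\delta = 1/\sqrt{\lambda}$, one gets
\begin{equation*}
    \sum_{k=0}^{\infty} V_{k,m}(x)^p = \lambda^{-p/2}\sqrt{\lambda}\int_{\R}\phi(t)^p\,\rd t + \oo_x\big(\lambda^{-(p-1)/2}\big) = \lambda^{(1-p)/2}\frac{1}{\sqrt{p}\,(2\pi)^{(p-1)/2}} + \oo_x\big(\lambda^{(1-p)/2}\big),
\end{equation*}
using $\int_{\R}\phi(t)^p\rd t = (2\pi)^{(1-p)/2}/\sqrt{p}$. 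Taking $p=2$ and multiplying by $m^{1/2} = (\lambda/x)^{1/2}$ yields $\tfrac{1}{\sqrt{x}}\cdot\tfrac{1}{2\sqrt{\pi}} = \tfrac{1}{\sqrt{4\pi x}}$, which is \eqref{eq:lem:technical.sums.R.claim.2.1}; taking $p=3$ gives $\tfrac{1}{x}\cdot\tfrac{1}{\sqrt3\cdot 2\pi} = \tfrac{1}{2\sqrt3\,\pi x}$, which is \eqref{eq:lem:technical.sums.R.claim.2.2}. The odd first-order correction term $\tfrac{1}{6}\delta_k^3 - \tfrac12\delta_k$ integrates against the even weight $\phi(t)^p$ to zero, and the quadratic terms contribute only at order $\lambda^{-1}$ relative to the main term, so they land in the $\oo_x(1)$; I would note the error bookkeeping explicitly but not belabor it.

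For \eqref{eq:lem:technical.sums.R.claim.2.3} I would instead return to the representation $\widetilde{R}_{1,m}^S(x) = -\tfrac{1}{2\sqrt m}\mathbb{E}|K-K'|$ and find the precise first-order asymptotics of $\mathbb{E}|K-K'|$. Since $K-K'$ has mean $0$ and variance $2\lambda$, the local CLT (or Lemma~\ref{lem:LLT.Poisson} applied to each factor, convolved) gives $\mathbb{E}|K-K'| = \sqrt{2\lambda}\,\mathbb{E}|Z| + \oo(\sqrt{\lambda}) = \sqrt{2\lambda}\cdot\sqrt{2/\pi} + \oo(\sqrt\lambda) = 2\sqrt{\lambda/\pi} + \oo(\sqrt\lambda)$, where $Z\sim\mathrm{Normal}(0,1)$; one can also see this directly as a Riemann sum, $\sum_{k,\ell}|k-\ell|V_{k,m}V_{\ell,m} = \lambda^{-1}\sum_{k,\ell}|k-\ell|\phi(\delta_k)\phi(\delta_\ell)(1+\oo(1)) \sim \sqrt\lambda\int\int|s-t|\phi(s)\phi(t)\,\rd s\,\rd t$, and the double integral equals $2/\sqrt\pi$. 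Hence $\widetilde{R}_{1,m}^S(x) = -\tfrac{1}{2\sqrt m}\cdot 2\sqrt{mx/\pi} + \oo_x(1) = -\sqrt{x/\pi} + \oo_x(1)$, as claimed. The main obstacle is the uniformity-in-the-tail bookkeeping in the Riemann-sum step: one must check that after dividing by the Gaussian weight the relative error from Lemma~\ref{lem:LLT.Poisson} stays integrable over the bulk $\{|\delta_k|\le\lambda^{1/6}\}$ (the polynomial factor $(1+|\delta_k|^9)$ times $\phi(\delta_k)^p$ is summable, and $e^{B^3}$ is a fixed constant once $B$ is fixed), and that the contribution near $k=0$ — where the log-expansion degrades — is absorbed either into the large-deviation tail (for $x$ fixed, $\delta_0 = -\sqrt{mx}\to-\infty$) or is $O(e^{-\lambda})$ outright since $V_{0,m}(x) = e^{-mx}$. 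None of these is deep, but they are where care is required.
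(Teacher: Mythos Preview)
Your proof is correct. For \eqref{eq:lem:technical.sums.R.claim.2.1} and \eqref{eq:lem:technical.sums.R.claim.2.2} you follow essentially the same route as the paper: split bulk/tail at $|\delta_k|\le \lambda^{1/6}$, apply Lemma~\ref{lem:LLT.Poisson} on the bulk, and pass to the Gaussian integral $\int_{\R}\phi^p$.

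For \eqref{eq:lem:technical.sums.R.claim.1} and \eqref{eq:lem:technical.sums.R.claim.2.3}, however, you take a genuinely different path. The paper bounds \eqref{eq:lem:technical.sums.R.claim.1} directly via $|k\wedge\ell-mx|\le|k-mx|+|\ell-mx|$ and Jensen, obtaining the constant $2$; for \eqref{eq:lem:technical.sums.R.claim.2.3} it decomposes $\widetilde R_{1,m}^S$ into the off-diagonal piece $2m^{1/2}\sum_{k<\ell}(\tfrac{k}{m}-x)V_kV_\ell$ plus a diagonal term, kills the diagonal by Cauchy--Schwarz and \eqref{eq:lem:technical.sums.R.claim.2.2}, and evaluates the off-diagonal by the local limit theorem and an integration by parts on the limiting Gaussian double integral. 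Your identity $k\wedge\ell=\tfrac12(k+\ell)-\tfrac12|k-\ell|$ collapses everything to the single representation $\widetilde R_{1,m}^S(x)=-\tfrac{1}{2\sqrt m}\,\EE|K-K'|$, from which the uniform bound is immediate (and sharper: you get $\sqrt{x/2}$ rather than $2\sqrt x$) and the limit follows from the CLT plus uniform integrability (or the double Riemann sum you sketch). This is cleaner and avoids the integration by parts entirely. The paper's decomposition, on the other hand, isolates the sum $\sum_{k<\ell}(\tfrac{k}{m}-x)V_kV_\ell$ explicitly, which is the quantity that had been mis-estimated in the literature (cf.\ Remark~\ref{rem:incorrect.Leblanc}), so its approach makes the correction more transparent. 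Both arguments rely on the same local limit ingredients and the same tail control.
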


        \begin{proof}
            By Jensen's inequality, we have, for all $m\in \N$,
            \begin{align}
                |\widetilde{R}_{1,m}^S(x)|
                &\leq 2 m^{-1/2} \sum_{k=0}^{\infty} |k - m x| V_{k,m}(x) \notag \\
                &\leq 2 m^{-1/2} \hspace{-1mm} \sqrt{\sum_{k=0}^{\infty} |k - m x|^2 V_{k,m}(x)} \notag \\
                &= 2 m^{-1/2} \cdot \sqrt{m x} \leq 2 \sqrt{x},
            \end{align}
            which proves \eqref{eq:lem:technical.sums.R.claim.1}.
            In order to prove \eqref{eq:lem:technical.sums.R.claim.2.1}, consider the decomposition
            \begin{equation}\label{eq:tail.bound}
                m^{1/2} \sum_{k=0}^{\infty} V_{k,m}^2(x) = m^{1/2} \hspace{-5mm}\sum_{k\in \N_0 : |\delta_k| \leq (m x)^{1/6}} \hspace{-5mm} V_{k,m}^2(x) ~+~ m^{1/2} \hspace{-5mm}\sum_{k\in \N_0 : |\delta_k| > (m x)^{1/6}} \hspace{-5mm} V_{k,m}^2(x).
            \end{equation}
            The second term on the right-hand side of \eqref{eq:tail.bound} is exponentially small in $(m x)^{1/3}$ by a standard large deviation bound (see \eqref{eq:large.deviation.bound}), and the first term on the right-hand side can be approximated by a Gaussian integral because of the local limit theorem we developed in Equation~\eqref{lem:LLT.Poisson.expansion} of Lemma~\ref{lem:LLT.Poisson}.

            \vspace{2mm}
            If $\phi_{\sigma^2}$ denotes the density of the $\mathrm{Normal}\hspace{0.3mm}(0,\sigma^2)$ distribution, then
            \begin{align}
                m^{1/2} \sum_{k=0}^{\infty} V_{k,m}^2(x)
                &= \int_{\R} \frac{1}{\sqrt{x}} \phi^2(z) \rd z + \oo_x(1) \notag \\
                &= \frac{2^{-1/2}}{\sqrt{2 \pi x}} \int_{\R} \phi_{\frac{1}{2}}(z) \rd z + \oo_x(1) \notag \\
                &= \frac{1}{\sqrt{4 \pi x}} \cdot 1 + \oo_x(1),
            \end{align}
            which proves \eqref{eq:lem:technical.sums.R.claim.2.1}.
            By a similar argument,
            \begin{align}
                m^{1/2} \sum_{k=0}^{\infty} V_{k,m}^3(x)
                &= \int_{\R} \frac{1}{x} \phi^3(z) \rd z + \oo_x(1) \notag \\
                &= \frac{3^{-1/2}}{2 \pi x} \int_{\R} \phi_{\frac{1}{3}}(z) \rd z + \oo_x(1) \notag \\
                &= \frac{1}{2 \sqrt{3} \pi x} \cdot 1 + \oo_x(1),
            \end{align}
            which proves \eqref{eq:lem:technical.sums.R.claim.2.2}.
            Finally, to obtain the asymptotics of $R_{1,m}^S(x)$, consider the decomposition
            \begin{equation}\label{eq:R.1.m.decomposition}
                \begin{aligned}
                    \widetilde{R}_{1,m}^S(x)
                    &= 2 m^{1/2} \hspace{-2mm} \sum_{0 \leq k < \ell < \infty} \hspace{-1mm} (\tfrac{k}{m} - x) V_{k,m}(x) V_{\ell,m}(x) \\
                    &\quad+ m^{1/2} \sum_{k=0}^{\infty} (\tfrac{k}{m} - x) V_{k,m}^2(x).
                \end{aligned}
            \end{equation}
            The second term on the right-hand side of \eqref{eq:R.1.m.decomposition} is negligible by the Cauchy-Schwarz inequality and \eqref{eq:lem:technical.sums.R.claim.2.2}:
            \begin{align}\label{eq:R.1.m.decomposition.next}
                m^{1/2} \sum_{k=0}^{\infty} (\tfrac{k}{m} - x) V_{k,m}^2(x)
                &\leq m^{-1/2} \sqrt{\sum_{k=0}^{\infty} (k - m x)^2 V_{k,m}(x)} \sqrt{\sum_{k=0}^{\infty} V_{k,m}^3(x)} \notag \\
                &= m^{-1/2} \cdot \sqrt{m x} \cdot \OO_x(\sqrt{m^{-1}}) \notag \\[2mm]
                &= \OO_x(m^{-1/2}).
            \end{align}
            For the first term on the right-hand side of \eqref{eq:R.1.m.decomposition}, we can use the local limit theorem (Lemma~\ref{lem:LLT.Poisson}) and integration by parts.
            Together with \eqref{eq:R.1.m.decomposition} and \eqref{eq:R.1.m.decomposition.next}, we obtain
            \begin{align}
                \widetilde{R}_{1,m}^S(x)
                &= 2 \cdot x \int_{-\infty}^{\infty} \frac{z}{x} \, \phi_x(z) \int_z^{\infty} \phi_x(y) \rd y \rd z + \oo_x(1) \notag \\
                &= 2 \cdot x \, \Big[0 - \int_{-\infty}^{\infty} \phi_{x}^2(z) \rd z\Big] + \oo_x(1) \notag \\[1mm]
                &= \frac{- 2 x}{\sqrt{4\pi x}} \int_{-\infty}^{\infty} \phi_{\frac{1}{2} x}(z) \rd z + \oo_x(1) \notag \\[1mm]
                &= -\sqrt{\frac{x}{\pi}} + \oo_x(1).
            \end{align}
            This ends the proof.
        \end{proof}

        \begin{proof}[Proof of Theorem~\ref{thm:asymptotics.variance}]
            By the independence of the observations $X_i$, a Taylor expansion for the c.d.f.\ $F$, and the asymptotic expression for the bias in Theorem~4 of \cite{arXiv:2005.09994}, we have
            \vspace{-3mm}
            \begin{align}
                \VV(\hat{F}_{m,n}^S(x))
                &= \frac{1}{n} \left\{\sum_{k,\ell=0}^{\infty} F\bigg(\frac{k \wedge \ell}{m}\bigg) V_{k,m} V_{\ell,m} - \bigg(\sum_{k=0}^{\infty} F(k / m) V_{k,m}(x)\bigg)^2\right\} \notag \\
                &= \frac{1}{n} \left\{\hspace{-1mm}
                    \begin{array}{l}
                        F(x) (1 - F(x)) + \OO_x(m^{-1}) \\[1mm]
                        + f(x) \sum_{k,\ell=0}^{\infty} \big(\frac{k \wedge \ell}{m} - x\big) V_{k,m}(x) V_{\ell,m}(x) \\[1mm]
                        + \OO_x\Big(\sum_{k,\ell=0}^{\infty} \big|\frac{k}{m} - x\big| \big|\frac{\ell}{m} - x\big| V_{k,m}(x) V_{\ell,m}(x)\Big)
                    \end{array}
                    \hspace{-1mm}\right\}.
            \end{align}
            Now, by the Cauchy-Schwarz inequality, the fact that the variance of a $\mathrm{Poisson}\hspace{0.2mm}(m x)$ random variable is $m x$, and the estimate for $\widetilde{R}_{1,m}^S(x)$ in Lemma~\ref{lem:technical.sums.R},
            \begin{align}
                \VV(\hat{F}_{m,n}^S(x))
                &= \frac{1}{n} \left\{\hspace{-1mm}
                    \begin{array}{l}
                        F(x) (1 - F(x)) + \OO_x(m^{-1}) \\[1.5mm]
                        + m^{-1/2} f(x) \cdot \widetilde{R}_{1,m}^S(x) \\[0.5mm]
                        + \OO_x\Big(m^{-2} \sum_{k=0}^{\infty} |k - m x|^2 V_{k,m}(x)\Big)
                    \end{array}
                    \hspace{-1mm}\right\} \notag \\[1mm]
                &= \frac{1}{n} \left\{\hspace{-1mm}
                    \begin{array}{l}
                        F(x) (1 - F(x)) + \OO_x(m^{-1}) \\[1mm]
                        + m^{-1/2} f(x) \cdot \widetilde{R}_{1,m}^S(x)
                    \end{array}
                    \hspace{-1mm}\right\} \notag \\[0mm]
                &= n^{-1} F(x) (1 - F(x)) - n^{-1} m^{-1/2} \sqrt{\frac{x}{\pi}} + \oo_x(n^{-1} m^{-1/2}).
            \end{align}
            Similarly, by the independence of the observations $X_i$, a Taylor expansion for the density $f$, and the asymptotic expression for the bias in Equation (8) of \cite{MR0574548}, we have
            \begin{align}\label{eq:histogram.estimator.var.asymp}
                &\VV(\hat{f}_{m,n}^S(x)) \notag \\[1mm]
                &\quad= \frac{m^2}{n} \, \left\{\hspace{-1mm}
                    \begin{array}{l}
                        \sum_{k=0}^{\infty} \int_{(\frac{k}{m}, \frac{k + 1}{m}]} \hspace{-0.5mm} f(y) \rd y \, V_{k,m}^2(x) \\
                        -\Big(\sum_{k=0}^{\infty} \int_{(\frac{k}{m}, \frac{k + 1}{m}]} \hspace{-0.5mm} f(y) \rd y \, V_{k,m}(x)\Big)^2
                    \end{array}
                    \hspace{-1mm}\right\} \notag \\
                &\quad= n^{-1} m^{1/2} \left\{\hspace{-1mm}
                    \begin{array}{l}
                        m^{1/2} \sum_{k=0}^{\infty} \big(f(x) + \OO_x(m^{-1}) + |k / m - x|\big) V_{k,m}^2(x) \\[1mm]
                        + \OO_x(m^{-1/2})
                    \end{array}
                    \hspace{-1mm}\right\}.
            \end{align}
            By the Cauchy-Schwarz inequality, the fact that the variance of a $\mathrm{Poisson}\hspace{0.2mm}(m x)$ random variable is $m x$, and the estimates \eqref{eq:lem:technical.sums.R.claim.2.1} and \eqref{eq:lem:technical.sums.R.claim.2.2} in Lemma~\ref{lem:technical.sums.R}, we have
            \begin{align}
                \VV(\hat{f}_{m,n}^S(x))
                &= n^{-1} m^{1/2} \left\{\hspace{-1mm}
                    \begin{array}{l}
                        (f(x) + \OO_x(m^{-1})) \, m^{1/2} \sum_{k=0}^{\infty} V_{k,m}^2(x) \\[1mm]
                        + \OO_x(m^{-1/2}) \\[1mm]
                        + \OO\left(\hspace{-1mm}
                            \begin{array}{l}
                                \sqrt{m^{-2} \sum_{k=0}^{\infty} |k - m x|^2 \, V_{k,m}(x)} \\[1mm]
                                \cdot \, \sqrt{m \sum_{k=0}^{\infty} V_{k,m}^3(x)}
                            \end{array}
                            \hspace{-1mm}\right)
                    \end{array}
                    \hspace{-1mm}\right\} \notag \\
                &= n^{-1} m^{1/2} \frac{f(x)}{\sqrt{4 \pi x}} + \oo_x(n^{-1} m^{1/2}).
            \end{align}
            This ends the proof.
        \end{proof}

        From Theorem~\ref{thm:asymptotics.variance}, other asymptotic expressions can be (and were) derived such as the mean squared error in \cite{MR0574548} and \cite{arXiv:2005.09994}.
        We can also optimize the bandwidth parameter $m$ with respect to these expressions to implement a plug-in selection method, exactly as we would in the setting of traditional multivariate kernel estimators, see e.g.\ \cite[Section~6.5]{MR3329609} or \cite[Section~3.6]{MR3822372}.

\begin{appendices}

\section{Technical lemmas}

    Below, we prove a general formula for the central moments of the Poisson distribution, and evaluate the second, third, fourth and sixth central moments explicitly. This lemma is used to estimate some expectations in \eqref{eq:estimate.III} and the $\asymp \lambda^{-1}$ errors in \eqref{eq:estimate.I.begin} of the proof of Theorem~\ref{thm:prelim.Carter}.
    It is also a preliminary result for the proof of Corollary~\ref{cor:central.moments.Poisson.on.events} below, where the central moments are estimated on various events.

    \begin{lemma}[Central moments of the Poisson distribution]\label{lem:central.moments.Poisson}
        Let $K\sim \mathrm{Poisson}\hspace{0.2mm}(\lambda)$ for some $\lambda > 0$.
        The general formula for the central moments is given by
        \begin{equation}\label{eq:lem:central.moments.Poisson.main}
            \EE[(K - \lambda)^n] = \sum_{\ell=0}^n \binom{n}{\ell} \bigg(\sum_{j=0}^{\ell} \brkbinom{\ell}{j} \lambda^j\bigg) (-\lambda)^{n - \ell}, \quad n\in \N,
        \end{equation}
        where $\brkbinom{\ell}{j}$ denotes a Stirling number of the second kind.
        In particular,
        \begin{equation}\label{eq:lem:central.moments.Poisson.examples}
            \begin{aligned}
                &\EE[(K - \lambda)^2] = \lambda, \\
                &\EE[(K - \lambda)^3] = \lambda, \\
                &\EE[(K - \lambda)^4] = 3 \lambda^2 + \lambda, \\
                &\EE[(K - \lambda)^6] = 15 \lambda^3 + 25 \lambda^2 + \lambda.
            \end{aligned}
        \end{equation}
    \end{lemma}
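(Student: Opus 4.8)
The plan is to reduce the whole statement to two elementary facts: that the \emph{factorial} moments of a Poisson random variable are pure powers of $\lambda$, and that ordinary powers expand into falling factorials with Stirling numbers of the second kind as coefficients. First I would apply the binomial theorem to $(K - \lambda)^n$ and take expectations term by term (a finite sum, so no convergence issue), which gives $\EE[(K-\lambda)^n] = \sum_{\ell=0}^n \binom{n}{\ell}(-\lambda)^{n-\ell}\,\EE[K^{\ell}]$. Comparing with \eqref{eq:lem:central.moments.Poisson.main}, it remains only to establish the identity $\EE[K^{\ell}] = \sum_{j=0}^{\ell}\brkbinom{\ell}{j}\lambda^{j}$ for every $\ell \in \N_0$.

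To prove that identity, I would first compute the falling-factorial moments. Writing $(K)_j \leqdef K(K-1)\cdots(K-j+1)$, only the terms $k \geq j$ contribute, and cancelling $(k)_j$ against $k!$ leaves
\[
\EE[(K)_j] = e^{-\lambda}\sum_{k \geq j}\frac{\lambda^{k}}{(k-j)!} = \lambda^{j}\, e^{-\lambda}\sum_{i \geq 0}\frac{\lambda^{i}}{i!} = \lambda^{j}.
\]
Next I would invoke the defining combinatorial identity of the Stirling numbers of the second kind, $x^{\ell} = \sum_{j=0}^{\ell}\brkbinom{\ell}{j}(x)_j$, valid as a polynomial identity in $x$; applying it with $x = K$ and taking expectations gives $\EE[K^{\ell}] = \sum_{j=0}^{\ell}\brkbinom{\ell}{j}\EE[(K)_j] = \sum_{j=0}^{\ell}\brkbinom{\ell}{j}\lambda^{j}$. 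Substituting this back into the binomial expansion above yields \eqref{eq:lem:central.moments.Poisson.main}.

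Finally, the four explicit expressions in \eqref{eq:lem:central.moments.Poisson.examples} follow by specializing $n \in \{2,3,4,6\}$. In practice the quickest route is to record the ordinary moments $\EE[K^{\ell}]$ for $\ell \le 6$ from the small Stirling numbers $\brkbinom{\ell}{j}$ and then form the alternating combination $\sum_{\ell=0}^n\binom{n}{\ell}(-\lambda)^{n-\ell}\EE[K^{\ell}]$; the powers of $\lambda$ higher than $\lfloor n/2\rfloor$ cancel, leaving exactly the stated polynomials. There is no genuine obstacle here beyond this bookkeeping — the only points needing a word of justification are the termwise interchange of $\EE[\cdot]$ with the finite binomial sum and the reindexing $i = k-j$ in the display above, both of which are immediate.
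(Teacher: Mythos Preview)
Your proof is correct and follows the same approach as the paper: expand $(K-\lambda)^n$ by the binomial theorem and then use the Touchard-polynomial identity $\EE[K^{\ell}]=\sum_{j=0}^{\ell}\brkbinom{\ell}{j}\lambda^{j}$. The only difference is that the paper simply cites this identity as well known, whereas you additionally derive it from the factorial-moment computation $\EE[(K)_j]=\lambda^{j}$ together with $x^{\ell}=\sum_{j}\brkbinom{\ell}{j}(x)_j$; this makes your argument slightly more self-contained but is otherwise the same route.
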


    \begin{proof}
        It is well known that the non-central moments of the Poisson distribution are given by Touchard polynomials (see, e.g., Equation (3.4) in \cite{doi:10.2307/2957598}), namely
        \begin{equation}
            \EE\big[K^{\ell}\big] = \sum_{j=0}^{\ell} \brkbinom{\ell}{j} \lambda^j, \quad \ell\in \N.
        \end{equation}
        The conclusion \eqref{eq:lem:central.moments.Poisson.main} follows from an application of the binomial formula.
    \end{proof}

    We can also estimate the moments of Lemma~\ref{lem:central.moments.Poisson} on various events.
    The corollary below is used to estimate the $\asymp \lambda^{-1/2}$ errors in \eqref{eq:estimate.I.begin} of the proof of Theorem~\ref{thm:prelim.Carter}.

    \begin{corollary}\label{cor:central.moments.Poisson.on.events}
        Let $K\sim \mathrm{Poisson}\hspace{0.2mm}(\lambda)$ for some $\lambda > 0$, and let $A\in \mathscr{B}(\R)$ be a Borel set.
        Then,
        \begin{equation}\label{eq:cor:central.moments.Poisson.on.events.main}
            \begin{aligned}
                &\big|\EE[(K - \lambda) \, \ind_{\{K\in A\}}]\big| \leq \lambda^{1/2} (\PP(K\in A^c))^{1/2}, \\
                &\big|\EE[(K - \lambda)^2 \, \ind_{\{K\in A\}}] - \lambda\big| \leq 2 (1 + \lambda) (\PP(K\in A^c))^{1/2}, \\
                &\big|\EE[(K - \lambda)^3 \, \ind_{\{K\in A\}}] - \lambda\big| \leq \sqrt{41} (1 + \lambda)^{3/2} (\PP(K\in A^c))^{1/2}.
            \end{aligned}
        \end{equation}
    \end{corollary}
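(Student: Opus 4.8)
The plan is to reduce all three inequalities in \eqref{eq:cor:central.moments.Poisson.on.events.main} to a single device: write each restricted moment as the full moment minus its complement,
\[
    \EE[(K - \lambda)^n \ind_{\{K\in A\}}] = \EE[(K - \lambda)^n] - \EE[(K - \lambda)^n \ind_{\{K\in A^c\}}],
\]
and then control the complementary term with the Cauchy--Schwarz inequality. Concretely, for $n\in\{1,2,3\}$ this gives
\[
    \big|\EE[(K - \lambda)^n \ind_{\{K\in A\}}] - \EE[(K - \lambda)^n]\big|
    \leq \EE\big[|K - \lambda|^n \ind_{\{K\in A^c\}}\big]
    \leq \sqrt{\EE[(K - \lambda)^{2n}]}\,\sqrt{\PP(K\in A^c)}.
\]
The ``main term'' $\EE[(K - \lambda)^n]$ on the left is read off from \eqref{eq:lem:central.moments.Poisson.examples}: it equals $0$, $\lambda$, $\lambda$ for $n = 1, 2, 3$, which matches the quantities being centered in the three lines of \eqref{eq:cor:central.moments.Poisson.on.events.main}.

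It then remains to bound the prefactor $\sqrt{\EE[(K - \lambda)^{2n}]}$ by the advertised constant times a power of $1 + \lambda$, once again invoking the explicit even central moments from \eqref{eq:lem:central.moments.Poisson.examples}. For $n = 1$ one has $\sqrt{\EE[(K - \lambda)^2]} = \sqrt{\lambda} = \lambda^{1/2}$, already in the required form. For $n = 2$, the bound $\sqrt{3\lambda^2 + \lambda} \leq 2(1 + \lambda)$ follows from $4(1 + \lambda)^2 - (3\lambda^2 + \lambda) = \lambda^2 + 7\lambda + 4 \geq 0$ for $\lambda > 0$. For $n = 3$, the bound $\sqrt{15\lambda^3 + 25\lambda^2 + \lambda} \leq \sqrt{41}\,(1 + \lambda)^{3/2}$ follows from $41(1 + \lambda)^3 - (15\lambda^3 + 25\lambda^2 + \lambda) = 26\lambda^3 + 98\lambda^2 + 122\lambda + 41 \geq 0$ for $\lambda > 0$. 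Feeding these three estimates into the displayed Cauchy--Schwarz inequality proves \eqref{eq:cor:central.moments.Poisson.on.events.main} line by line.

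I do not expect a real obstacle here: the argument is essentially a two-line application of Cauchy--Schwarz layered on top of Lemma~\ref{lem:central.moments.Poisson}. The only point deserving any care is the bookkeeping of the numerical constants $1$, $2$, $\sqrt{41}$, which are settled by the three elementary polynomial inequalities above and are deliberately left unoptimized --- any crude domination of the moment polynomials by a constant multiple of $(1 + \lambda)^n$ would be enough, since this corollary is used only to absorb lower-order error terms in the proof of Theorem~\ref{thm:prelim.Carter}.
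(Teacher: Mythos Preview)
Your proof is correct and follows exactly the same approach as the paper: pass to the complement, apply Cauchy--Schwarz, and bound the even central moments from Lemma~\ref{lem:central.moments.Poisson} by the stated powers of $(1+\lambda)$. The only cosmetic difference is that the paper verifies the constants by the coarser term-by-term bound $\lambda^j \leq (1+\lambda)^n$ (e.g.\ $3\lambda^2 + \lambda \leq 4(1+\lambda)^2$ directly), whereas you expand the difference explicitly; both are fine.
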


    \begin{proof}
        Note that \eqref{eq:lem:central.moments.Poisson.examples} implies
        \begin{equation}\label{eq:cor:central.moments.Poisson.on.events.proof.first}
            \begin{aligned}
                &\EE[(K - \lambda)^2] = \lambda, \\
                &\EE[(K - \lambda)^4] \leq 3 (1 + \lambda)^2 + (1 + \lambda)^2 = 4 (1 + \lambda)^2, \\
                &\EE[(K - \lambda)^6] \leq 15 (1 + \lambda)^3 + 25 (1 + \lambda)^3 + (1 + \lambda)^3 = 41 (1 + \lambda)^3.
            \end{aligned}
        \end{equation}
        By \eqref{eq:lem:central.moments.Poisson.examples}, we also have
        \begin{equation}
            \begin{aligned}
                &\big|\EE[(K - \lambda) \, \ind_{\{K\in A\}}]\big| = \big|\EE[(K - \lambda) \, \ind_{\{K\in A^c\}}]\big|, \\
                &\big|\EE[(K - \lambda)^2 \, \ind_{\{K\in A\}}] - \lambda\big| = \big|\EE[(K - \lambda)^2 \, \ind_{\{K\in A^c\}}]\big|, \\
                &\big|\EE[(K - \lambda)^3 \, \ind_{\{K\in A\}}] - \lambda\big| = \big|\EE[(K - \lambda)^3 \, \ind_{\{K\in A^c\}}]\big|.
            \end{aligned}
        \end{equation}
        We get \eqref{eq:cor:central.moments.Poisson.on.events.main} by applying the Cauchy-Schwarz inequality and bounding using \eqref{eq:cor:central.moments.Poisson.on.events.proof.first}.
    \end{proof}

\section{Propagation of errors due to Lemma~2 (iv) of \texorpdfstring{\cite{MR2960952}}{Leblanc (2012a)}}\label{sec:error.propagation}

            The incorrect estimate in Lemma~2 (iv) of \cite{MR2960952}, mentioned in Remark~\ref{rem:incorrect.Leblanc}, has propagated in the literature and caused many subsequent errors in the statements of theorems, propositions and/or lemmas for articles and theses dealing with Bernstein c.d.f.\ estimators.
            Below are 15 articles and theses (in alphabetical order of the authors' last name) where at least one erroneous statement can be traced back to the error in Lemma~2 (iv) of \cite{MR2960952}:

            \begin{itemize}[leftmargin=*]\setlength\itemsep{-1em}
                \item \cite{Belalia2016phd} \vspace{-1mm}
                    \begin{itemize}
                        \item $V_x(y)$ should be equal to $\sqrt{x (1 - x) / (4\pi)} F_x'(y)$ in Proposition~2.2, Theorem~2.2 and Theorem~2.3;
                        \item $V(x,y)$ should be equal to $\big[F_x(x,y) (x (1-x) / \pi)^{1/2} + F_y(x,y) (x (1-x) / \pi)^{1/2}\big]$ in Theorem~4.1, Corollary 4.1, Corollary 4.2 and Theorem~4.2;
                        \item $V_Z(z)$ should be equal to $\big[F_Z(z) (z (1-z) / \pi)^{1/2}\big]$ in Theorem~4.1;
                        \item $V(x_1,\dots,x_d)$ should be equal to $\sum_{j=1}^d \Big\{F_{x_j}(x_1,\dots,x_d) (x_j (1 - x_j) / \pi)^{1/2}\Big\}$ in Remark~ 4.2;
                        \item $\psi_2(x)$ should be equal to $\sqrt{x (1 - x) / (4\pi)}$ in Lemma~4.1 (ii);
                    \end{itemize}
                    \vspace{5mm}
                \item \cite{MR3474765} \vspace{-1mm}
                    \begin{itemize}
                        \item $V(x,y)$ should be equal to $\big[F_x(x,y) (x (1-x) / \pi)^{1/2} + F_y(x,y) (x (1-x) / \pi)^{1/2}\big]$ in Theorem~1, Corollary 1, Corollary 2 and Theorem~2;
                        \item $V_Z(z)$ should be equal to $\big[F_Z(z) (z (1-z) / \pi)^{1/2}\big]$ in Theorem~1;
                        \item $V(x_1,\dots,x_d)$ should be equal to $\sum_{j=1}^d \Big\{F_{x_j}(x_1,\dots,x_d) (x_j (1 - x_j) / \pi)^{1/2}\Big\}$ in Remark~ 2;
                        \item $\psi_2(x)$ should be equal to $\sqrt{x (1 - x) / (4\pi)}$ in Lemma~1 (ii);
                    \end{itemize} 
                    \vspace{5mm}
                \item \cite{MR3630225} \vspace{-1mm}
                    \begin{itemize}
                        \item $V_x(y)$ should be equal to $\sqrt{x (1 - x) / (4\pi)} F_x'(y)$ in Proposition~2, Theorem~2 and Theorem~3;
                    \end{itemize} 
                    \vspace{5mm}
                \item \cite{doi:10.1080/03610926.2020.1734832} \vspace{-1mm}
                    \begin{itemize}
                        \item $V(x,y)$ should be equal to $\big[H_x(x,y) (x (1-x) / \pi)^{1/2} + H_y(x,y) (x (1-x) / \pi)^{1/2}\big]$ in Proposition~3;
                    \end{itemize} 
                    \vspace{5mm}
                \item \cite{MR3473628} \vspace{-1mm}
                    \begin{itemize}
                        \item $V(x_0)$ should be equal to $f(x_0) [x_0 (1 - x_0) / \pi]^{1/2}$ on page 242 and everywhere inside $m_{\text{opt}}$;
                    \end{itemize} 
                    \vspace{5mm}
                \item \cite{MR3899096} \vspace{-1mm}
                    \begin{itemize}
                        \item $V(x)$ should be equal to $f(x) [x (1 - x) / \pi]^{1/2}$ in Theorem~2;
                        \item The r.h.s.\ of (16) should be equal to $W^2(m)^{-1/2} \big\{-[x (1 - x) / (4\pi)]^{1/2} + \OO(1)\big\}$;
                    \end{itemize} 
                    \vspace{5mm}
                \item \cite{Hanebeck2020master} \vspace{-1mm}
                    \begin{itemize}
                        \item $V(x)$ should be equal to $f(x) [x (1 - x) / \pi]^{1/2}$ in Theorem~5.3, Theorem~5.5, Corollary 5.2, Theorem~5.6, Corollary 5.3 and Theorem~5.7;
                        \item $V^S(x)$ should be equal to $f(x) [x / \pi]^{1/2}$ in Theorem~6.4, Theorem~6.6, Corollary 6.2, Theorem~6.7, Corollary 6.3 and Theorem~6.8;
                        \item $R_{1,m}^S(x)$ should be equal to $m^{-1/2} \{-\sqrt{x / (4\pi)} + \oo_x(1)\}$ in Lemma~6.3 (e);
                    \end{itemize}
                    \vspace{5mm}
                \item \cite[arXiv v.1]{arXiv:2005.09994} \vspace{-1mm}
                    \begin{itemize}
                        \item $V^S(x)$ should be equal to $f(x) [x / \pi]^{1/2}$ in Theorem~5, Theorem~7, Corollary 2, Theorem~8, Corollary 3 and Theorem~ 9;
                        \item $R_{1,m}^S(x)$ should be equal to $m^{-1/2} \{-\sqrt{x / (4\pi)} + \oo_x(1)\}$ in Lemma~3 (e);
                    \end{itemize} 
                    {\bf Note: The subsequent arXiv versions of \cite{arXiv:2005.09994} were corrected following Theorem~\ref{thm:asymptotics.variance} and Lemma~\ref{lem:technical.sums.R} of the present paper.}
                    \vspace{5mm}
                \item \cite{Jmaei2018phd} \vspace{-1mm}
                    \begin{itemize}
                        \item $V(x)$ should be equal to $f(x) [x (1 - x) / \pi]^{1/2}$ in Proposition~1.3.2 and in the expression of the MSE and MISE on page 52;
                        \item $V(x)$ should be equal to $f(x) [x (1 - x) / \pi]^{1/2}$ in Proposition~2.3.1, Proposition~2.3.2, Corollary 2.3.1, (2.3.7), (2.3.8) and Remark~2.3.1;
                    \end{itemize}
                    \newpage
                \item \cite{MR3740720} \vspace{-1mm}
                    \begin{itemize}
                        \item $V(x)$ should be equal to $f(x) [x (1 - x) / \pi]^{1/2}$ in Proposition~3.1, Proposition~3.2, Corollary 3.1, (10), (11) and Remark~3.1;
                    \end{itemize} 
                    \vspace{5mm}
                \item \cite{MR2925964} \vspace{-1mm}
                    \begin{itemize}
                        \item $\Psi(x)$ should be equal to $\sqrt{x (1 - x) / \pi}$ in (18) and Lemma~8
                    \end{itemize} 
                    \vspace{5mm}
                \item \cite{Lyu2020master} \vspace{-1mm}
                    \begin{itemize}
                        \item $V(x)$ should be equal to $f(x) [x (1 - x) / \pi]^{1/2}$ in Theorem~3.3;
                        \item $V(x,y)$ should be equal to $\big\{F_x(x,y) (x (1-x) / \pi)^{1/2} + F_y(x,y) (x (1-x) / \pi)^{1/2}\big\}$ in Theorem~3.4;
                        \item $V_Z(z)$ should be equal to $\big\{F_Z(z) (z (1-z) / \pi)^{1/2}\big\}$ in Theorem~3.4;
                        \item $\psi_2(x)$ should be equal to $\sqrt{x (1 - x) / (4\pi)}$ in Lemma~3 (iv);
                    \end{itemize}
                    \vspace{5mm}
                \item \cite{doi:10.1007/s13163-021-00384-0}
                    \begin{itemize}
                        \item $\frac{2 x (1 - x)}{\pi}$ should be replaced by $\frac{x (1 - x)}{\pi}$ everywhere on page 9;
                    \end{itemize}
                    \vspace{5mm}
                \item \cite{TchouakeTchuiguep2013master} \vspace{-1mm}
                    \begin{itemize}
                        \item $V(x)$ should be equal to $f(x) [x (1 - x) / \pi]^{1/2}$ in Th\'eor\`eme 3.4 and Corollaire 3.6;
                        \item $\gamma_2(x)$ should be equal to $\sqrt{x (1 - x) / (4\pi)}$ in Lemma~3.5 (ii);
                    \end{itemize}
                    \vspace{5mm}
                \item \cite{MR3950592} \vspace{-1mm}
                    \begin{itemize}
                        \item $\psi_2(x)$ should be equal to $[t (1 - t) / (4\pi)]^{1/2}$ in Lemma~2 (ii);
                        \item The last term of $V(t)$ in Lemma~3 (iii) should be equal to $\big(t (1 - t) / (4 \pi)\big)^{1/2}$.
                    \end{itemize} 
            \end{itemize}
\end{appendices}

\section*{Acknowledgments}

The author would like to thank an anonymous referee for his valuable comments that led to improvements in the presentation of this paper.

%
%

\section*{References}
\phantomsection
\addcontentsline{toc}{chapter}{References}

\nocite{arXiv:2002.06956}

\bibliographystyle{authordate1}
\bibliography{Ouimet_2021_LLT_Poisson_bib}

\end{document}